\newcommand{\beq}{\begin{eqnarray}}
\newcommand{\eeq}{\end{eqnarray}}
\newcommand{\be}[1]{\begin{equation}\label{#1}}
\newcommand{\ee}{\end{equation}}
\newcommand{\eps}{{\eps}}
\newcommand{\R}{{\mathbb R}}
\newcommand{\Sp}{\mathbb S}
\def\1{\mathbb I}
\renewcommand{\(}{\left(}
\renewcommand{\)}{\right)}
\newcommand{\nrm}[2]{\|{#1}\|_{\mathrm L^{#2}(\R^N)}}
\newcommand{\nrmC}[2]{\|{#1}\|_{\mathrm L^{#2}(\mathcal C)}}
\renewcommand{\eps}{\varepsilon}
\newcommand{\gammatheta}{{\gamma_{_\theta}}}
\newtheorem{theorem}{Theorem}
\newtheorem{lemma}[theorem]{Lemma}
\newtheorem{corollary}[theorem]{Corollary}
\newtheorem{proposition}[theorem]{Proposition}
\newtheorem{remark}[theorem]{Remark}
\begin{document}

\title[Symmetry results for Caffarelli-Kohn-Nirenberg inequalities]{Symmetry of extremals of functional inequalities via spectral estimates for linear operators}

\author[J. Dolbeault, M. J. Esteban]{Jean Dolbeault, Maria J. Esteban}
\address{Jean Dolbeault, Maria J. Esteban: Ceremade (UMR CNRS no. 7534), Univ. Paris-Dauphine, Pl. de Lattre de Tassigny, 75775 Paris Cedex~16, France}
\email{dolbeaul@ceremade.dauphine.fr, esteban@ceremade.dauphine.fr}

\author[M. Loss]{Michael Loss}
\address{Michael Loss: School of Mathematics, Georgia Institute of Technology Atlanta, GA 30332, USA}
\email{loss@math.gatech.edu}

\keywords{Hardy-Sobolev inequality; Caffarelli-Kohn-Nirenberg inequality; extremal functions; Kelvin transformation; Emden-Fowler transformation; radial symmetry; symmetry breaking; rigidity; Lieb-Thirring inequalities; generalized Poincar\'e inequalities; estimates of the best constants; cylinder; Riemannian manifold; Ricci curvature \\
{\scriptsize\sl AMS classification (2010):} 26D10; 46E35; 58E35}

\date{\today}

\begin{abstract}
We prove new symmetry results for the extremals of the Caffarelli-Kohn-Nirenberg inequalities in any dimension larger or equal than $2\,$, in a range of parameters for which no explicit results of symmetry were previously known.
\end{abstract}

\maketitle
\thispagestyle{empty}

\centerline {\emph{Dedicated to Elliott Lieb on the occasion of his 80th birthday}}

\section{Introduction}\label{sect1}

The \emph{Caffarelli-Kohn-Nirenberg inequalities}~\cite{Caffarelli-Kohn-Nirenberg-84} in space dimension \hbox{$N\geq 2$} can be written as
\be{HSN}
\(\int_{\R^N}\frac{|w|^p}{|x|^{b\,p}}\;dx\)^{2/p}\leq\,C^N_{a,b}\int_{\R^N}\frac{|\nabla w|^2}{|x|^{2\,a}}\;dx\quad\forall\;w\in\mathcal D_{a,b}
\ee
with $a\leq b\leq a+1$ if $N\ge 3\,$, $a< b\leq a+1$ if $N=2\,$, and $a\neq a_c$ defined by
\[
a_c=a_c(N):=\frac{N-2}2\;.
\]
The exponent
\[
p=p(a,b):=\frac{2\,N}{N-2+2\,(b-a)}
\]
is determined by scaling considerations. Inequality~\eqref{HSN} holds in the space
\[
\mathcal D_{a,b}:=\Big\{\,w\in\mathrm L^p(\R^N,|x|^{-b}\,dx)\,:\,|x|^{-a}\,|\nabla w|\in\mathrm L^2(\R^N,dx)\Big\}
\]
and in this paper $C^N_{a,b}$ denotes the optimal constant. Typically, Inequality~\eqref{HSN} is stated with $a<a_c$ (see~\cite{Caffarelli-Kohn-Nirenberg-84}) so that the space $\mathcal D_{a,b}$ is obtained as the completion of $C_c^\infty(\R^N)\,$, the space of smooth functions in $\R^N$ with compact support, with respect to the norm $\|w\|^2=\|\,|x|^{-b}\,w\,\|_p^2+\|\,|x|^{-a}\,\nabla w\,\|_2^2\,$. Actually~\eqref{HSN} holds also for $a>a_c\,$, but in this case $\mathcal D_{a,b}$ is obtained as the completion with respect to $\|\cdot\|$ of the space $C_c^\infty(\R^N\setminus\{0\}):=\big\{w\in C_c^\infty(\R^N)\,:\,\mbox{supp}(w)\subset\R^N\setminus\{0\}\big\}\,$. Inequality~\eqref{HSN} is sometimes called the \emph{Hardy-Sobolev inequality,} as for $N > 2$ it interpolates between the usual Sobolev inequality ($a=0\,$, $b=0$) and the weighted Hardy inequalities corresponding to $b=a+1$ (see~\cite{Catrina-Wang-01}, a key paper by F.~Catrina and Z.-Q.~Wang, on this topic).

For $b=a<0\,$, $N\ge 3\,$, equality in~\eqref{HSN} is never achieved in $\mathcal D_{a,b}\,$. For $b=a+1$ and $N\ge 2\,$, the best constant in~\eqref{HSN} is given by $C_{a,a+1}^N=(a_c-a)^2$ and it is never achieved (see~\cite[Theorem 1.1, (ii)]{Catrina-Wang-01}). In contrast, for $a<b<a+1$ and $N\ge 2\,$, the best constant in~\eqref{HSN} is always achieved at some {\sl extremal\/} function $w_{a,b}\in \mathcal D_{a,b}\,$. However $w_{a,b}$ is not explicitly known unless we have the additional information that it is radially symmetric with respect to the origin. In the class of radially symmetric functions, the extremals of~\eqref{HSN} are all given (see~\cite{Chou-Chu-93,MR1731336,Catrina-Wang-01}) up to scaling and multiplication by a constant,~by
\[\label{9.1}
w^*_{a,b}(x)=\(1+|x|^{\frac{2\,(N-2-2\,a)(1+a-b)}{N-2\,(1+a-b)}}\)^{-\frac{N-2\,(1+a-b)}{2\,(1+a-b)}}\,.
\]
See~\cite{Catrina-Wang-01,Dolbeault-Esteban-Tarantello-08} for more details and in particular for a \emph{modified inversion symmetry} property of the extremal functions, based on a generalized Kelvin transformation, which relates the parameter regions $a<a_c$ and $a>a_c\,$.

In the parameter region $0\leq a<a_c\,$, $a\leq b\leq a+1\,$, if $N\ge 3\,$, the extremals are radially symmetric (see~\cite{Aubin-76, Talenti-76, Lieb-83, GMGT} and more specifically~\cite{Chou-Chu-93,MR1731336}; also see~\cite{DELT09} for a proof based on symmetrization and \cite{Dolbeault-Esteban-Tarantello-Tertikas} for an extension to the larger class of inequalities considered in Section \ref{Sec:Beyond}). On the other hand, extremals are known to be non-radially symmetric for a certain range of parameters $(a,b)$ identified first in~\cite{Catrina-Wang-01} and subsequently improved by V.~Felli and M.~Schneider in~\cite{Felli-Schneider-03}, where it was shown that in the region $a<0\,$, $a<b<b_{\,\rm FS}(a)$ with
\[\label{bfs}
b_{\,\rm FS}(a):=\frac{2\,N\,(a_c-a)}{\sqrt{(a_c-a)^2+(N-1)}}+a-a_c\;,
\]
extremals are non-radially symmetric. The proof is based on an analysis of the second variation of the functional associated to~\eqref{HSN} around the radial extremal $w^*_{a,b}\,$. Above the curve $b=b_{\,\rm FS}(a)\,$, all corresponding eigenvalues are positive and $w^*_{a,b}$ is a strict local minimum, while there is at least one negative eigenvalue if $b<b_{\,\rm FS}(a)$ and $w^*_{a,b}$ is then a saddle point. As $a\to-\infty\,$, $b=b_{\,\rm FS}(a)$ is asymptotically tangent to $b=a+1\,$.

By contrast, few symmetry results were available in the literature for $a<0\,$, and they are all of a perturbative nature. We refer the reader to~\cite{Oslo} for a detailed review of existing results. When $N\geq 3$ and for a fixed $b\in (a,a+1)\,$, radial symmetry of the extremals has been proved for $a$ close to $0$ (see~\cite{Lin-Wang-04,MR2053993}; also see~\cite[Theorem 4.8]{MR2001882} for an earlier but slightly less general result). In the particular case $N=2\,$, a symmetry result was proved in~\cite{Dolbeault-Esteban-Tarantello-08} for~$a$ in a neigbourhood of $0_-\,$, which asymptotically complements the symmetry breaking region described in~\cite{Catrina-Wang-01,Felli-Schneider-03,Dolbeault-Esteban-Tarantello-08} as \hbox{$a\to 0_-\,$}. Later, in~\cite{DELT09}, it was proved that for every $a<0$ and $b$ sufficiently close to $a+1\,$, the global minimizers are also symmetric. Due to the perturbative nature of these results, there were currently no explicit values $a < 0$ for which one knew the radial symmetry of the minimizer. For instance, up to now, it was not known whether the extremals of the inequality
\be{Casep=3}
\(\int_{\R^3} |w|^3\,dx\)^{2/3}\le C^3_{-1/2,0}\int_{\R^3} |x|\,|\nabla w|^2\,dx
\ee
were radial or not, and as a consequence, the value of $C^3_{-1/2,0}$ was also unknown.

For any $N\ge 2\,$, it has been proved in~\cite{DELT09} that, in the two-dimensional region of the parameters $a$ and $b\,$, the symmetry and symmetry breaking regions are both simply connected, and separated by a continuous curve starting at the point $(a=0,\,b=0)\,$, contained in the region $a<0\,$, \hbox{$b_{\,\rm FS}(a)<b<a+1$}. The curve $b_{\,\rm FS}$ can be parametrized by $a$ as a function of $b-a\,$, such that \hbox{$a\to-\infty$} as $b-a\to1_-\,$. It is also known from~\cite{DELT09} that the region of symmetry contains a neighborhood of the set $a<0\,$, $b=a+1$ and $a=0_-\,$, $0<b<1$ in the set $a<0\,$, $b_{\,\rm FS}(a)\le b<a+1\,$, but no explicit estimate of this neighborhood has been given yet. These results are all based on compactness arguments. Since radial symmetry is broken in certain parameter ranges,
it seems unlikely that a universal tool, like symmetrization, can be applied in the case $a<0\,$.

In this paper we determine a large region for $a<0$ where the extremals of the Caffarelli-Kohn-Nirenberg inequalities~\eqref{HSN} are radial. The result will be expressed in terms of the following function
\[
b_\star(a):=\frac{N\,(N-1)+4\,N\,(a-a_c)^2}{6\,(N-1)+8\,(a-a_c)^2}+a-a_c\;.
\]
\begin{theorem}\label{main} Let $N\ge 2\,$. When $a < 0$ and $b_\star(a)\le b<a+1\,$, the extremals of the Caffarelli-Kohn-Nirenberg inequality~\eqref{HSN}
are radial and
\be{sharp}
C^N_{a,b}= |\Sp^{N-1}|^{\frac{p-2}p} {\textstyle\left[\frac{(a-a_c)^2\,(p-2)^2}{p+2}\right]^\frac{p-2}{2\,p}\!\left[\frac{p+2}{2\,p\,(a-a_c)^2}\right]\!\left[\frac 4{p+2}\right]^\frac{6-p}{2\,p}} \left[\tfrac{\Gamma\(\frac{2}{p-2}+\frac 12\)}{\sqrt\pi\;\Gamma\(\frac{2}{p-2}\)}\right]^\frac{p-2}p \kern -8pt .
\ee
\end{theorem}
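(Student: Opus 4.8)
The plan is to use the Emden--Fowler (logarithmic cylindrical) change of variables to transform \eqref{HSN} into an equivalent inequality on the cylinder $\mathcal C=\R\times\Sp^{N-1}$. Writing $w(x)=|x|^{-(a_c-a)}\,\varphi(s,\omega)$ with $s=-\log|x|$ and $\omega=x/|x|$, the Euler--Lagrange equation for the extremals becomes a semilinear elliptic equation on $\mathcal C$ of the form $-\varphi''-\Delta_\omega\varphi+\Lambda\,\varphi=\varphi^{p-1}$, where $\Lambda=(a_c-a)^2$ and $\Delta_\omega$ is the Laplace--Beltrami operator on $\Sp^{N-1}$; radial symmetry of the original extremal corresponds to $\varphi$ depending on $s$ only. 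The key point of the method is a rigidity result: one shows that, in the stated parameter range, any positive solution of this equation on $\mathcal C$ must be independent of $\omega$, hence coincides (after translation) with the explicit one-dimensional solution $\varphi_\star(s)$, from which \eqref{sharp} follows by an explicit (if tedious) integral computation of the corresponding constant.

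The heart of the argument, and what I expect to be the main obstacle, is the rigidity/spectral estimate. I would follow the commutator (or ``$\Gamma_2$'') strategy: multiply the equation by a well-chosen nonlinear test function of $\varphi$ — typically $-\mathrm{div}\,(|\nabla\varphi|^{p-2}\nabla\varphi)$ or, in the linearized/iterated spirit of the paper's title, a carefully weighted power $\varphi^{\beta}$ — integrate over $\mathcal C$, and integrate by parts to produce a sum of manifestly nonnegative terms (a Bochner/Cartan type identity involving $\|\mathrm{Hess}\,\varphi\|^2$, the Ricci curvature of $\Sp^{N-1}$, which equals $(N-2)\,g$, and cross terms) plus one term carrying the spectral gap of $-\Delta_\omega$, whose first nonzero eigenvalue is $N-1$. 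The miracle is that for a judicious choice of exponent $\beta=\beta(p,N)$ all these contributions combine with a definite sign precisely when a one-parameter inequality between $p$ (equivalently $b-a$), $N$, and $\Lambda=(a_c-a)^2$ holds; tracking the optimal $\beta$ and simplifying the resulting condition is exactly what yields the threshold $b\ge b_\star(a)$. I would derive this by writing the flow/interpolation inequality abstractly — a generalized Poincar\'e (Lieb--Thirring type) inequality on the cylinder with the operator $-\varepsilon\,\partial_s^2-\Delta_\omega$ for a suitable $\varepsilon$ — and then optimizing.

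The next step is to convert the rigidity statement into symmetry of the \emph{extremals}. Here I would invoke that extremals exist for $a<b<a+1$ (stated in the excerpt), that after the change of variables they solve the above equation on $\mathcal C$, and — to be able to apply the rigidity computation, which a priori needs some regularity and decay — that they are smooth and decay exponentially in $s$; these facts are standard (elliptic regularity, moving planes/Kelvin transform bounds near the singularity, and the known behaviour at infinity), and I would cite them rather than reprove them. Once we know the extremal is a function of $s$ alone, it must be $\varphi_\star$ up to translation and scaling, because the one-dimensional ODE $-\varphi''+\Lambda\varphi=\varphi^{p-1}$ has, up to these invariances, a unique positive finite-energy solution, namely $\varphi_\star(s)=\big[\tfrac{p+2}{2\Lambda}\big]^{1/(p-2)}\big(\cosh(\tfrac{p-2}{2}\sqrt\Lambda\,s)\big)^{-2/(p-2)}$ (after normalization); this is the soliton profile.

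Finally, to obtain the explicit constant \eqref{sharp}, I would plug $\varphi_\star$ back in: compute $\int_{\mathcal C}|\varphi_\star|^p\,ds\,d\omega$ and $\int_{\mathcal C}\big(|\varphi_\star'|^2+\Lambda\,\varphi_\star^2\big)\,ds\,d\omega$. Both reduce, via the substitution $t=\tanh(\tfrac{p-2}{2}\sqrt\Lambda\,s)$, to Beta integrals, and the ratio gives $C^N_{a,b}$ up to the factor $|\Sp^{N-1}|^{(p-2)/p}$ coming from the angular integration and an elementary Jacobian from the Emden--Fowler map; organizing the Gamma functions via the duplication formula produces the displayed expression with $\Gamma\!\big(\tfrac{2}{p-2}+\tfrac12\big)/\big(\sqrt\pi\,\Gamma\!\big(\tfrac{2}{p-2}\big)\big)$. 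This last part is a routine but bookkeeping-heavy calculation; the genuine mathematical content, and the step I would budget the most effort for, is pinning down the sharp exponent $\beta$ in the integration-by-parts so that the sign condition collapses exactly to $b\ge b_\star(a)$ rather than to a strictly smaller region.
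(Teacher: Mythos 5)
Your skeleton --- Emden--Fowler transformation, a rigidity statement for positive solutions of $-\partial_s^2u-L^2u+\Lambda u=u^{p-1}$ on the cylinder, then an explicit Beta-integral computation with the $\cosh^{-2/(p-2)}$ profile --- is exactly the paper's. But the heart of your argument, the rigidity step, is proposed via a direct Bochner/$\Gamma_2$ computation on the whole cylinder (multiply by a nonlinear test function, integrate by parts, collect $\|\mathrm{Hess}\,\varphi\|^2$ and Ricci terms), and this is where there is a genuine gap. The cylinder has a flat direction: the Ricci curvature vanishes along $s$, so the Bidaut-V\'eron--V\'eron mechanism does not close on $\mathcal C$ itself, and you give no computation showing that some exponent $\beta$ rescues it. More tellingly, the threshold $b_\star(a)$, i.e.\ $\Lambda\le\Lambda_\star(p)=\tfrac{(N-1)(6-p)}{4(p-2)}$, is not the output of a single integration-by-parts identity: in the paper it arises as the ratio of two \emph{separately sharp} constants --- the one-dimensional one-bound-state Lieb--Thirring constant applied slice-by-slice in $s$ with potential $V=u^{p-2}$ (Lemma~\ref{thm:lt}), and the constant $\tfrac{N-1}{q-1}$ in the generalized Poincar\'e inequality \eqref{generalizedSobolev} on $\Sp^{N-1}$ applied to $v(\omega)=(\int_\R u^2\,ds)^{1/2}$, glued by Schwarz and H\"older. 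The curvature input lives only on the compact factor. A direct carr\'e du champ on $\mathcal C$, if it could be made to work, would be expected to produce the Felli--Schneider threshold rather than $b_\star$; asserting that "tracking the optimal $\beta$" collapses the sign condition exactly to $b\ge b_\star(a)$ is the entire theorem, and it is left unproved.

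A second, related problem: you state the rigidity as "any positive solution of this equation on $\mathcal C$ must be independent of $\omega$" in the given parameter range, and only afterwards specialize to extremals. The paper's rigidity (Theorem~\ref{thm:mainfinal}) is conditional on the energy normalization \eqref{assumption}, $\int_{\mathcal C}|u|^p\,ds\,d\omega\le\int_\R|u_*|^p\,ds$, and this condition is used quantitatively: it is what forces $D\le\Lambda\le\Lambda_\star(p)$ in the chain $-\Lambda\int u^2=\mathcal F[u]\ge-D\int u^2\ge-\Lambda\int u^2$ and hence equality everywhere. Unconditional rigidity for all positive solutions is not established by the paper and is not needed (extremals satisfy \eqref{assumption} automatically since $C^N_{a,b}\ge C^*$), so your stronger claim would require a separate justification that your sketch does not supply. (Minor point: your normalization of the amplitude of $\varphi_\star$ is off --- it should be $A=(p\Lambda/2)^{1/(p-2)}$ --- but that only affects bookkeeping in the final constant.)
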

An elementary computation shows that
\[\textstyle
a\mapsto b_\star(a)-b_{\,\rm FS}(a)=\frac N2\left[1-\frac{a_c-a}{\sqrt{(a_c-a)^2+N-1}}-\frac{2\,(N-1)}{4\,(a_c-a)^2+3\,(N-1)}\right]
\]
is an increasing function of $a\le0\,$, so that, for any $a\le0\,$,
\[\label{diff2b}
0\le b_\star(a)-b_{\,\rm FS}(a)\le b_\star(0)-b_{\,\rm FS}(0)=\frac 1{1+N\,(N-1)}\;.
\]
Hence $a\mapsto b_\star(a)-b_{\,\rm FS}(a)$ is controlled by its value at $a=0\,$. The plots are qualitatively similar in any dimension $N\ge2\,$, and $a\mapsto b_\star(a)-b_{\,\rm FS}(a)$ uniformly converges for $a\le0$ to $0$ as $N$ increases.
\begin{figure}[hb]
\includegraphics[width=10cm]{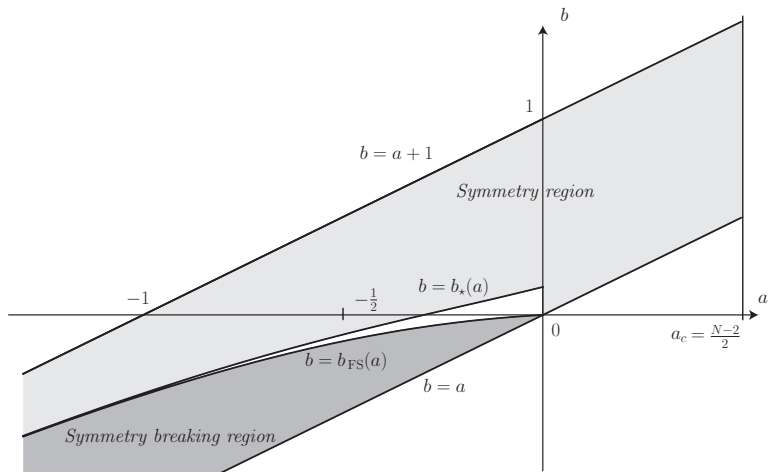}
\caption{\small\emph{In dimension $N=3\,$, the symmetry region is shown in light grey: the best constant in~\eqref{HSN} is achieved among radial functions if either $a<0$ and $b_\star(a)\le b<a+1\,$, or $0\le a<a_c$ and $a\le b\le 1\,$. The known symmetry breaking region appears in dark grey and contains at least the region $a<0\,$, $a\le b<b_{\,\rm FS}(a)\,$. Symmetry in the tiny (white) zone in between those regions is still an open question.}}
\end{figure}
See Fig.~1 for an illustration of Theorem~\ref{main}. As an example, for all $N\ge 2\,$, for $a=-\frac12$ and $b=0\,$, we find that
\[
b_\star(-\tfrac 12)=-\frac 12\,\frac{N-2}{N+2}\le0\;,
\]
so that all extremals of the corresponding Caffarelli-Kohn-Nirenberg inequality, \eqref{Casep=3}, are radial and, as a consequence,
\[
\frac 1{C^N_{-1/2,0}}=\inf_{w\in\mathcal D_{a,b}(\R^N)\setminus\{0\}}\frac{\int_{\R^N} |x|\,|\nabla w|^2 dx}{\(\int_{\R^N} |w|^{\frac{2\,N}{N-1}} dx\)^{\frac{N-1}{N}}}=\tfrac 4{N\,(N-1)}\left[\tfrac{\pi^\frac{N-1}2\,\Gamma\big(N+\frac 12\big)}{\Gamma\big(\frac N2\big)\,\Gamma(N)}\right]^\frac 1N.
\]

It has been already observed in \cite{Catrina-Wang-01} that Caffarelli-Kohn-Nirenberg inequalities on $\R^N$ are equivalent to interpolation inequalities of Gagliardo-Nirenberg-Sobolev type on the cylinder $\mathcal C:=\R\times\Sp^{N-1}\,$. On the other hand, a classical observation in the Euclidean space is that Gagliardo-Nirenberg-Sobolev interpolation inequalities are equivalent to estimates on eigenvalues of a Schr\"odinger type operator, in terms of a Lebesgue norm of the potential: see \cite{Lieb-Thirring76,DFLP}. Such estimates are known as Lieb-Thirring estimates. In the present context
we shall consider only the \emph{one-bound state version of the Lieb-Thirring inequality,} in which only the lowest eigenvalue is taken into account. The strategy of the proof of Theorem \ref{main} is to exploit a similar equivalence of the two inequalities on the cylinder, and rely on the one-dimensional version of the Lieb-Thirring inequality (\emph{i.e.} which depends only on the $s$ coordinate corresponding to the axis of the cylinder) and on a generalized Poincar\'e inequality on $\Sp^{N-1}\,$.

\medskip This paper is organized as follows. In Section~\ref{cylinder}, Inequality~\eqref{HSN} is rewritten on the cylinder $\mathcal C=\R\times\Sp^{N-1}$ and the one-dimensional, one-bound state version of the Lieb-Thirring inequality is stated. Section~\ref{Sec:Proof} is devoted to the proof of Theorem~\ref{main}, which, as mentioned above, relies on the one-bound state Lieb-Thirring inequality and on a generalized Poincar\'e inequality on the sphere. Theorem~\ref{main} is also reformulated as a rigidity result for the eigenfunction associated to the lowest eigenvalue of a Schr\"odinger operator on the cylinder. Rigidity means that the optimizing potential for the Lieb-Thirring inequality depends only on the variable $s$ and not the angular variable.
Details will be given in Section~\ref{Sec:LTN}. In Section~\ref{Sec:Beyond}, a larger class of Caffarelli-Kohn-Nirenberg inequalities than \eqref{HSN} is considered: see \eqref{CKNtheta}. Neither rigidity nor symmetry results can be achieved in such a general case, but the method used for \eqref{HSN} still provides estimates on the optimal constants. Finally, in Section~\ref{Sec:General} we explain how the proofs of our results for the case $\mathcal C =\R \times \Sp^{N-1}$ can be adapted to general cylinders $\mathcal C=\R\times\mathcal M\,$, where $\mathcal M$ is a compact manifold without boundary and with positive Ricci curvature. Critical for this result is an extension of the sharp generalized Poincar\'e inequality to such manifolds $\mathcal M$ due to M.-F. Bidaut-V\'eron and L. V\'eron that can be found in \cite{BV-V}.

\section{Lieb-Thirring type inequalities on the cylinder}\label{cylinder}

Before we state the one-dimensional, one-bound state version of the Lieb-Thirring inequality, let us introduce a transformation which removes the weights in the Caffarelli-Kohn-Nirenberg inequalities.

\subsubsection*{ Emden-Fowler transformation}\label{Emden-Fowler}

We reformulate the Caffarelli-Kohn-Niren\-berg inequalities in cylindrical variables (see~\cite{Catrina-Wang-01}) using the Emden-Fowler transformation
\[\label{3.1}
s=\log|x|\;,\quad\omega=\frac{x}{|x|}\in\Sp^{N-1}\,,\quad u(s,\omega)=|x|^{a_c-a}\,w(x)\;.
\]
Inequality~\eqref{HSN} for $w$ is equivalent to a Gagliardo-Niren\-berg-Sobolev inequality on the cylinder $\mathcal C:=\R\times\Sp^{N-1}\,$, that is
\be{3.3}
\|u\|^2_{L^p(\mathcal C)}\leq\,C^N_{a,b}\;\(\nrmC{\nabla u}2^2+\Lambda\,\nrmC u2^2\)\,,
\ee
for any $u\in H^1(\mathcal C)\,$, with $\Lambda$ and $p$ given in terms of $a\,$, $a_c=a_c(N)$ and $N$ by
\[\label{Lambda-p}
\Lambda=(a_c-a)^2\quad\mbox{and}\quad p=\frac{2\,N}{N-2+2\,(b-a)}\;,
\]
and the same optimal constant $C^N_{a,b}$ as in~\eqref{HSN}.

It turns out to be convenient to reparametrize the problem, originally written in terms of $a$ and $b\,$, in terms of the parameters $\Lambda$ and $p\,$. Hence we shall use the notation $C(\Lambda,p,N):= C_{a,b}^N$ and define $C^*(\Lambda,p,N)$ as the best constant in~\eqref{3.3} among functions depending on $s$ only. For $a<0\,$, the conditions $b<b_{\,\rm FS}(a)$ and $b\ge b_\star(a)$ respectively become $\Lambda>\Lambda_{\rm FS}(p)$ and $\Lambda\le\Lambda_\star(p)\,$, where
\[
\Lambda_{\rm FS}(p):=4\,\frac{N-1}{p^2-4}\quad\mbox{and}\quad\Lambda_\star(p):=\frac{(N-1)\,(6-p)}{4\,(p-2)}\;.
\]
It is straightforward to check that $\Lambda_\star<\Lambda_{\rm FS}$ and $\frac{\Lambda_\star}{\Lambda_{\rm FS}}=\frac1{16}\,(6-p)(p+2)$ is a strictly decreasing function of $p\in(2,\frac{2\,N}{N-2}]$ for any $N\ge 3\,$, such that $1=\lim_{p\to 2}\frac{\Lambda_\star}{\Lambda_{\rm FS}}\ge\frac{\Lambda_\star}{\Lambda_{\rm FS}}\ge\lim_{p\to\frac{2\,N}{N-2}}\frac{\Lambda_\star}{\Lambda_{\rm FS}}=\frac{(N-1)(N-3)}{(N-2)^2}\,$. If $N=2\,$, $b\ge b_\star(a)$ means $p\le 6-\frac{16\,a^2}{1\,+\,4\,a^2}<6\,$.

Radial symmetry of $w=w(x)$ means that $u=u(s,\omega)$ is independent of~$\omega\,$. Up to multiplication by a constant, the extremal functions in the class of functions depending only on~$s\in\R$ solve the equation
\be{eq:onedim}
-u_*'' +\Lambda\,u_*= u_*^{p-1}\quad\mbox{in}\quad \R \,.
\ee
Up to translations in $s$ and multiplication by a constant, non-negative solutions of this equation are all equal to the function
\be{ustar}
u_*(s):=\frac A{\big[\cosh(B\,s)\big]^\frac 2{p-2}}\quad\forall\;s\in\R\;,
\ee
with
\be{AB}
A=\tfrac12\,\Lambda\quad\hbox{and}\quad B=\tfrac12\,\sqrt{\Lambda}\,(p-2)
\ee
(see Section~\ref{Sec:Beyond} for details).

We can restate Theorem~\ref{main} in terms of the variables $p$ and $\Lambda$ as follows.
\begin{theorem}\label{main-cylinder} Let $N\ge 2\,$. For all $p> 2$ and $a_c^2<\Lambda\le\Lambda_\star(p)\,$, the extremals of~\eqref{3.3} depend only on the variable $s\,$. That is, all extremals of~\eqref{3.3} are equal to $u_*\,$, up to a translation and a multiplication by a constant. \end{theorem}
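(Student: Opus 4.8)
The plan is to show that no extremal of~\eqref{3.3} can depend on $\omega$ when $\Lambda\le\Lambda_\star(p)$, by transferring the problem into a one–dimensional spectral statement and invoking the one–bound state Lieb–Thirring inequality together with the sharp generalized Poincar\'e inequality on $\Sp^{N-1}$. Concretely, let $u\in H^1(\mathcal C)$ be an extremal of~\eqref{3.3}, normalized so that $\|u\|_{L^p(\mathcal C)}=1$, and write $V:=|u|^{p-2}$. Then $u$ is the ground state of the Schr\"odinger operator $-\Delta_{\mathcal C}+\Lambda-V$ on $\mathcal C$ with lowest eigenvalue $0$; indeed, being an optimizer, it satisfies the Euler--Lagrange equation $-\Delta_{\mathcal C}u+\Lambda\,u=C(\Lambda,p,N)^{-1}\,|u|^{p-2}u$, and after rescaling $u$ by a constant one may absorb the constant so that the eigenvalue is $0$. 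The reformulation of Theorem~\ref{main} as a rigidity statement (announced in the introduction) is precisely that, for $\Lambda\le\Lambda_\star(p)$, the optimal potential $V$ in the associated Lieb--Thirring problem depends only on $s$; equivalently, $u=u_*$ up to translation and scaling.

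The core of the argument decomposes the cylinder energy according to the splitting of functions on $\Sp^{N-1}$ into their average over the sphere and the orthogonal complement. First I would average the equation, or rather the quadratic form, over $\omega$: setting $\bar V(s):=|\Sp^{N-1}|^{-1}\int_{\Sp^{N-1}}V(s,\omega)\,d\omega$ and using that $\int_{\Sp^{N-1}}|\nabla_\omega u|^2\,d\omega\ge 0$, one gets that the one--dimensional Schr\"odinger operator $-\partial_s^2+\Lambda-\bar V(s)$ acting on functions of $s$ has a nonpositive lowest eigenvalue (test it against $\bar u(s)$). The one--dimensional, one--bound state Lieb--Thirring inequality (stated in Section~\ref{cylinder}) then gives a lower bound on $\int_{\R}\bar V^{(p+2)/2}\,ds$, with equality characterized by the profile $u_*$ from~\eqref{ustar}. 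On the other hand, H\"older's inequality on the sphere bounds $\int_{\R}\bar V^{(p+2)/2}\,ds$ from above by $|\Sp^{N-1}|^{-1}\int_{\mathcal C}V^{(p+2)/2}=|\Sp^{N-1}|^{-1}\int_{\mathcal C}|u|^p$, i.e.\ by a fixed quantity under our normalization. The mismatch between what the Lieb--Thirring bound forces and what the mass constraint allows is exactly where the hypothesis $\Lambda\le\Lambda_\star(p)$ enters: the generalized Poincar\'e inequality on $\Sp^{N-1}$, $\int_{\Sp^{N-1}}|\nabla_\omega f|^2\,d\omega\ge(N-1)\int_{\Sp^{N-1}}|f-\bar f|^2\,d\omega$ together with its nonlinear (interpolation) refinement, controls the energy carried by the non--radial part of $u$ and shows that any such part strictly increases the left-hand side deficit in~\eqref{3.3} relative to the radial competitor, forcing $u=\bar u$.

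In more detail, I would argue by contradiction: suppose $u$ is an extremal with $u\neq\bar u$. Write $u=\bar u+u^\perp$ with $\int_{\Sp^{N-1}}u^\perp\,d\omega=0$ for a.e.\ $s$. Expanding the Rayleigh quotient for~\eqref{3.3}, the numerator picks up the extra nonnegative term $\int_{\mathcal C}|\nabla_\omega u^\perp|^2\ge(N-1)\int_{\mathcal C}|u^\perp|^2$, while the denominator $\|u\|_{L^p(\mathcal C)}^2$ is estimated from above in terms of $\|\bar u\|_{L^p}^2$ plus a correction that, by the sharp Sobolev/Poincar\'e inequality on $\Sp^{N-1}$ in the nonlinear form, is dominated by a multiple of $\int_{\mathcal C}|\nabla_\omega u^\perp|^2$ with constant governed by $\Lambda_\star(p)$. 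Comparing, one finds that for $\Lambda\le\Lambda_\star(p)$ the radial function $\bar u$ has a strictly smaller Rayleigh quotient than $u$ unless $u^\perp\equiv0$, contradicting optimality. Once symmetry is established, equality in the one--dimensional Lieb--Thirring inequality identifies $u$ with $u_*$ up to translation and scaling, i.e.\ $u$ solves~\eqref{eq:onedim} and is given by~\eqref{ustar}--\eqref{AB}.

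The main obstacle I anticipate is making the comparison of the $L^p(\mathcal C)$ norms sharp enough: a crude H\"older or Poincar\'e estimate on the nonlinear term $\|\bar u+u^\perp\|_{L^p}$ loses constants and would only reach a region strictly smaller than $\Lambda\le\Lambda_\star(p)$. The delicate point is to use the \emph{optimal} generalized Poincar\'e (Bidaut-V\'eron--V\'eron type) inequality on $\Sp^{N-1}$ in its interpolation form, and to combine it with the one--bound state Lieb--Thirring inequality in the $s$ variable \emph{with matching extremal profiles}, so that the two sharp inequalities are saturated simultaneously exactly at $u_*$; any slack in either one must be tracked and shown to be consistent with the threshold $\Lambda_\star(p)=\frac{(N-1)(6-p)}{4(p-2)}$. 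Getting the algebra of these two sharp constants to line up is the crux of the proof, and is presumably what dictates the precise form of $b_\star(a)$ and of the constant in~\eqref{sharp}.
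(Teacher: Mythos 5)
You have correctly identified the two key external ingredients (the one-bound-state Lieb--Thirring inequality in one dimension and the sharp generalized Poincar\'e inequality on $\Sp^{N-1}$), but the mechanism you propose for combining them is not the paper's and, as described, does not close. Your first route --- averaging the potential $V=|u|^{p-2}$ over $\omega$ and testing $-\partial_s^2+\Lambda-\bar V$ against $\bar u$ --- already fails at the first step: $\int_{\Sp^{N-1}}\int_\R V\,|u|^2$ is not $\int_\R \bar V\,|\bar u|^2$ (nor is $\overline{u}^{\,2}$ equal to $\overline{u^2}$), so the averaged operator need not have a nonpositive eigenvalue, and averaging throws away the angular kinetic energy that must be played against the Poincar\'e inequality. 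The paper instead applies the one-dimensional Lieb--Thirring bound \emph{slice-wise, for each fixed $\omega$} (this is clean precisely because $(\gamma+\tfrac12)(p-2)=p$, so $\int_\R V^{\gamma+1/2}\,ds=\int_\R|u|^p\,ds$), obtaining a lower bound involving the \emph{nonlinear} reduced function $v(\omega)=\big(\int_\R|u(s,\omega)|^2\,ds\big)^{1/2}$; the angular energy is then transferred to $v$ via the Schwarz inequality $\int_{\mathcal C}|Lu|^2\ge\int_{\Sp^{N-1}}|Lv|^2$, and the generalized Poincar\'e inequality is applied to $v$ with the exponent $q+1=2\gamma/(\gamma-1)$ forced by H\"older duality against the Lieb--Thirring exponent $1/\gamma$. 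This reduction to $v$ is the missing idea in your write-up.

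Your second route --- decomposing $u=\bar u+u^\perp$ into the spherical mean and its orthogonal complement and comparing Rayleigh quotients --- is essentially the second-variation analysis of Felli--Schneider: it is intrinsically local and only detects the curve $\Lambda_{\rm FS}(p)$, not the global threshold $\Lambda_\star(p)$; you acknowledge the loss of constants but offer no way to recover it. The paper's argument is global because every step is an inequality valid for arbitrary solutions, and the chain is closed not by a perturbative comparison but by the energy identity $\mathcal F[u]=-\Lambda\int_{\mathcal C}u^2$ for solutions of the Euler--Lagrange equation together with the a priori bound $\int_{\mathcal C}|u|^p\,ds\,d\omega\le\int_\R|u_*|^p\,ds$ (which holds for extremals because $C^N_{a,b}\ge C^*$ and $C^N_{a,b}=\|u\|_{L^p(\mathcal C)}^{2-p}$); this yields $-\Lambda\|u\|_2^2=\mathcal F[u]\ge-D\|u\|_2^2\ge-\Lambda\|u\|_2^2$ when $\Lambda\le\Lambda_\star(p)$, forcing equality in every intermediate inequality, after which the equality cases of Lieb--Thirring, H\"older and Poincar\'e identify $u$ with $u_*$. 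Without the slice-wise Lieb--Thirring step, the reduction to $v$, and the closing energy identity, your sketch cannot reach the stated threshold.
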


Note that $\Lambda\in(a_c^2,\Lambda_\star(p)]$ implies $p<6$ if $N=2$ and $p<\frac{2\,N}{N-2}$ if $N\ge 3\,$.

\subsubsection*{Interpolation and Lieb-Thirring inequalities}\label{Sec:LT}{}

\medskip Before we prove Theorem \ref{main-cylinder}, we state a well-known result, the Lieb-Thirring inequality for one-bound state in dimension $1\,$. We will later use it for potentials depending on the variable $s\in\R$ of the cylinder.
\begin{lemma}\label{thm:lt}{\rm \cite{Lieb-Thirring76}} Let $V=V(s)$ be a non-negative real valued potential in $\mathrm L^{\gamma+\frac 12}(\R)$ for some $\gamma > 1/2$ and let $-\lambda_1(V)$ be the lowest eigenvalue of the Schr\"odinger operator $-\frac{d^2}{ds^2} -V\,$. Define
\[
c_{\rm LT}(\gamma)=\frac{\pi^{-1/2}}{\gamma -1/2}\,\frac{\Gamma(\gamma+1)}{\Gamma(\gamma+1/2)}\(\frac{\gamma-1/2}{\gamma+1/2}\)^{\gamma+1/2}\,.
\]
Then
\be{inequ:lt}
\lambda_1(V)^\gamma\le c_{\rm LT}(\gamma)\int_\R V^{\gamma +1/2}(s)\;ds \;,
\ee
with equality if and only if, up to scalings, translations and a multiplication by a positive constant,
\[
V(s)=\frac{\gamma^2-1/4}{\cosh^2(s)}=:V_0(s)\,,
\]
in which case
\[
\lambda_1(V_0)=\(\gamma-1/2\)^2\,.
\]
Furthermore, the corresponding eigenspace is generated by by
\[
\psi_\gamma(s)=\pi^{-1/4}\(\frac{\Gamma(\gamma)}{\Gamma(\gamma-1/2)}\)^{1/2}\,\big[\cosh(s)\big]^{-\gamma +1/2}\,.
\]
\end{lemma}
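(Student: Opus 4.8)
The final statement to prove is Lemma~\ref{thm:lt}, the one-bound-state Lieb--Thirring inequality in dimension one, together with the identification of the optimizers. Here is the plan.

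\emph{Reduction to a variational problem.} First I would recast the bound \eqref{inequ:lt} as an optimization problem: maximize $\lambda_1(V)$ under a constraint on $\nrm{V}{\gamma+1/2}$. By the variational characterization, $\lambda_1(V)=\sup_{\psi}\(\int_\R V\,|\psi|^2\,ds-\int_\R|\psi'|^2\,ds\)/\int_\R|\psi|^2\,ds$, and since $V\ge0$ one may take $\psi\ge0$. For fixed $\psi$, the inner maximization over $V$ with $\nrm{V}{\gamma+1/2}$ fixed is solved by Hölder: the optimal $V$ is proportional to $|\psi|^{2\,q}$ where $q$ is the conjugate exponent, i.e. $\tfrac1{\gamma+1/2}+\tfrac1q=1$, so $q=\tfrac{\gamma+1/2}{\gamma-1/2}$ and $2q=\tfrac{2\gamma+1}{\gamma-1/2}$. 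Substituting back, the whole problem reduces to proving a Gagliardo--Nirenberg inequality on $\R$ of the form
\be{GN-onedim}
\nrm{\psi}{2r}^2\le C(\gamma)\,\nrm{\psi'}{2}^{2\theta}\,\nrm{\psi}{2}^{2(1-\theta)}
\ee
for the appropriate exponent $r$ and interpolation parameter $\theta$ read off from the scaling, with the sharp constant $C(\gamma)$ matching $c_{\rm LT}(\gamma)$; duality between \eqref{inequ:lt} and \eqref{GN-onedim} is exact because Hölder's inequality is saturated precisely when $V\propto|\psi|^{2q}$.

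\emph{Solving the one-dimensional Gagliardo--Nirenberg problem.} The optimizers of \eqref{GN-onedim} on $\R$ are known explicitly: after normalizing, the Euler--Lagrange equation is $-\psi''+\psi=\psi^{2r-1}$ up to scaling, whose unique (modulo translation) positive solution is $\psi(s)=\big(\text{const}\big)\,[\cosh(\beta s)]^{-1/(r-1)}$ for the suitable $\beta$. This is the content of \eqref{eq:onedim}--\eqref{AB} specialized to the exponent determined by $\gamma$; uniqueness up to translation and scaling on the line follows from the standard ODE shooting/phase-plane argument (every positive $H^1$ solution of $-\psi''+\psi=\psi^{p-1}$ on $\R$ is a translate of the explicit soliton). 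Plugging this profile into both sides of \eqref{GN-onedim} and computing the resulting Beta-function integrals $\int_\R[\cosh s]^{-\alpha}\,ds=\sqrt\pi\,\Gamma(\alpha/2)/\Gamma((\alpha+1)/2)$ gives the sharp constant in closed form; tracing the duality back yields exactly $c_{\rm LT}(\gamma)$, the optimal potential $V_0(s)=(\gamma^2-1/4)\cosh^{-2}(s)$, the ground-state energy $\lambda_1(V_0)=(\gamma-1/2)^2$, and the normalized eigenfunction $\psi_\gamma$.

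\emph{Equality analysis.} To pin down the equality cases one argues in both directions. If $V$ achieves equality in \eqref{inequ:lt}, then taking $\psi$ to be the (positive) ground state of $-d^2/ds^2-V$ turns every inequality in the chain above into an equality; saturation of Hölder forces $V\propto|\psi|^{2q}$, and saturation of \eqref{GN-onedim} forces $\psi$ to be the soliton profile, hence $V=V_0$ up to the stated scalings, translations and positive multiplicative constant. Conversely one checks directly that $V_0$ works. The main technical obstacle is the sharp constant and uniqueness in the one-dimensional Gagliardo--Nirenberg inequality \eqref{GN-onedim} — i.e. establishing that the explicit $\cosh$-soliton is the \emph{unique} minimizer up to the obvious invariances; everything else (the Hölder duality, the Beta-integral bookkeeping, and matching $c_{\rm LT}(\gamma)$) is routine. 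Since this lemma is quoted from \cite{Lieb-Thirring76}, in the paper itself it may suffice to indicate this reduction and the explicit computation rather than reprove the uniqueness of the one-dimensional GNS extremal from scratch.
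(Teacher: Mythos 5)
Your proposal is correct and follows essentially the same route as the paper: Lemma~\ref{thm:lt} is quoted from \cite{Lieb-Thirring76}, and the paper's own justification (the duality argument in Lemma~\ref{Lem:LT} and the computation of $c_{\rm LT}(\gamma)$ in Step~5 of Section~\ref{Sec:Beyond}) is exactly your Hölder-duality reduction to the sharp one-dimensional Gagliardo--Nirenberg inequality, with the explicit $\cosh$-soliton and the Beta-function integrals. One small bookkeeping correction: saturation of Hölder gives $V^{\gamma+1/2}\propto\psi^{2q}$, i.e.\ $V\propto\psi^{p-2}$ with $p=2q$, not $V\propto\psi^{2q}$; this does not affect the rest of your argument.
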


\section{Proof of main result}\label{Sec:Proof}

We will consider functions $u=u(s,\omega)$ where the variable $s$ and $\omega$ are respectively in $\R$ and $\Sp^{N-1}\,$. By $d\omega$ we denote the uniform probability measure on $\Sp^{N-1}$ and we will denote by $L^2$ the Laplace-Beltrami operator on $\Sp^{N-1}\,$, that can be written as
\[
L^2=\sum_\alpha L_\alpha^2
\]
where the sum is over all ordered pairs $\alpha=(i,j)$\,, $1\le i<j\le N\,$, and
\[
L_\alpha u=\omega_i\,\partial_j u -\omega_j\,\partial_i u\;,
\]
where $\partial_i$ denotes the derivative along the coordinate $\omega_i$ and the sphere $\Sp^{N-1}$ is defined as the set $\left\{\omega=(\omega_i)_{i=1}^N \in \R^N\,:\, \Sigma_{i=1}^N \omega_i^2=1\right\}\,$.

We shall use the abbreviation
\[
\int_{\Sp^{N-1}} |Lu|^2\,d\omega=\sum_\alpha\int_{\Sp^{N-1}} |L_\alpha u|^2\,d\omega\;.
\]
Naturally, we have
\[
\int_{\Sp^{N-1}} |Lu|^2\,d\omega= -\int_{\Sp^{N-1}} u\,(L^2 u)\;d\omega\;.
\]
The main result of our paper goes as follows.
\begin{theorem}\label{thm:mainfinal}
Let $N\ge 2$ and let $u$ be a non-negative function on $\mathcal C=\R\times\Sp^{N-1}$ that satisfies
\be{pde}
-\partial_s^2 u -L^2 u +\Lambda\,u= u^{p-1} \quad\hbox{on}\quad{\mathcal C}\;,
\ee
and consider the solution $u_*$ given by~\eqref{ustar}. Assume that
\be{assumption}
\int_{\mathcal C}|u(s,\omega)|^p\,ds\,d\omega\le\int _\R |u_*(s)|^p\,ds \;,
\ee
for some $p>2\,$. If $a_c^2<\Lambda\le\Lambda_\star(p)\,$, then for a.e.~$\omega\in\Sp^{N-1}$ and $s\in\R\,$, we have $u(s,\omega)=u_*(s-C)$ for some constant $C\,$.
\end{theorem}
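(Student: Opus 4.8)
The plan is to decouple the $s$-dependence from the $\omega$-dependence by testing the PDE~\eqref{pde} against $u$ and integrating. First I would fix $s$ and, writing $v(s):=\big(\int_{\Sp^{N-1}}|u(s,\omega)|^2\,d\omega\big)^{1/2}$, I would try to produce a one-dimensional differential inequality for $v$. Multiplying~\eqref{pde} by $u$ and integrating over $\Sp^{N-1}$ gives, after integration by parts on the sphere,
\[
-\tfrac12\,\partial_s^2\!\int_{\Sp^{N-1}}\!u^2\,d\omega+\int_{\Sp^{N-1}}\!|\partial_s u|^2\,d\omega+\int_{\Sp^{N-1}}\!|Lu|^2\,d\omega+\Lambda\!\int_{\Sp^{N-1}}\!u^2\,d\omega=\int_{\Sp^{N-1}}\!u^p\,d\omega\,.
\]
The Laplace–Beltrami term is nonnegative, and by the Cauchy–Schwarz/Jensen estimates $\big(\int_{\Sp^{N-1}}|\partial_s u|^2\,d\omega\big)\ge (v')^2$ and $\big(\int_{\Sp^{N-1}}u^p\,d\omega\big)\ge v^p$ (the latter only when $p\ge2$, which holds), one gets $-v\,v''+\Lambda\,v^2\le v^p$, i.e.\ $-v''+\Lambda\,v\le v^{p-1}$ pointwise in $s$. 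I would also record that equality throughout forces $u(s,\cdot)$ to be constant on $\Sp^{N-1}$ for a.e.\ $s$ (from the Jensen steps) and $Lu\equiv0$, which is exactly the desired rigidity; the whole point is then to force equality.

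The mechanism to force equality is the Lieb--Thirring inequality of Lemma~\ref{thm:lt} together with the mass normalization~\eqref{assumption}. The idea is to view~\eqref{pde} as saying that $u$ is the ground state of the Schrödinger operator $-\partial_s^2-L^2-V$ on $\mathcal C$ with potential $V:=u^{p-2}$ and eigenvalue $-\Lambda$; restricting to the fiber, $v$ is a supersolution of the one-dimensional operator $-\partial_s^2-v^{p-2}$ at energy $-\Lambda$, so $\lambda_1(v^{p-2})\ge\Lambda$ in dimension one. I would choose $\gamma$ so that $\gamma-1/2=\sqrt\Lambda/B=\tfrac{2}{p-2}$ wait — more precisely pick $\gamma$ with $\gamma+\tfrac12=\tfrac{p}{p-2}$ (equivalently $\gamma-\tfrac12=\tfrac{2}{p-2}$), so that $V^{\gamma+1/2}=v^{p}$ and the Lieb--Thirring exponent matches. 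Then $\gamma=\tfrac{p+2}{2(p-2)}>\tfrac12$, and the hypothesis $\gamma>1/2$ together with $\Lambda\le\Lambda_\star(p)$ should be equivalent to $\lambda_1\ge\Lambda=(\gamma-1/2)^2$-type bookkeeping — here the constraint $\Lambda\le\Lambda_\star(p)$ is precisely what is needed to make the chain of inequalities consistent with equality, i.e.\ it guarantees that the Lieb--Thirring constant, combined with the generalized Poincaré inequality on $\Sp^{N-1}$ controlling the gap from the sphere, leaves no room for a strictly positive angular Dirichlet energy. Concretely: $\Lambda^\gamma\le\lambda_1(v^{p-2})^\gamma\le c_{\rm LT}(\gamma)\int_\R v^p\,ds\le c_{\rm LT}(\gamma)\int_{\mathcal C}u^p\,ds\,d\omega\le c_{\rm LT}(\gamma)\int_\R u_*^p\,ds$, and a direct computation (using $A,B$ from~\eqref{AB} and the explicit $V_0,\psi_\gamma$) shows the far right equals $\Lambda^\gamma$ exactly when $\Lambda=\Lambda_\star(p)$ — or, for $\Lambda<\Lambda_\star(p)$, one instead argues that $v=v_*$ already and then revisits the angular part. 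I would turn this into the statement that every inequality in the chain is an equality.

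From equality in the Lieb--Thirring step, the rigidity part of Lemma~\ref{thm:lt} forces $v^{p-2}=V_0$ up to scaling/translation, hence $v(s)=u_*(s-C)$; from equality in $\int_{\Sp^{N-1}}u^p\,d\omega\ge v^p$ (Jensen) we get that $u(s,\cdot)$ is a.e.\ constant on $\Sp^{N-1}$, and from $\int_{\Sp^{N-1}}|Lu|^2\,d\omega=0$ we get the same conclusion; combining, $u(s,\omega)=v(s)=u_*(s-C)$ for a.e.\ $(s,\omega)$, as claimed. The main obstacle I anticipate is the bookkeeping in the second paragraph: making the interval $a_c^2<\Lambda\le\Lambda_\star(p)$ drop out cleanly as the sharp threshold requires pairing the Lieb--Thirring inequality in the $s$-variable with the sharp generalized Poincaré (spectral gap) inequality on $\Sp^{N-1}$ — the lowest nonzero eigenvalue of $-L^2$ is $N-1$ — and checking that the resulting quadratic-in-$\Lambda$ (or in $p$) condition is exactly $\Lambda\le\Lambda_\star(p)=\tfrac{(N-1)(6-p)}{4(p-2)}$. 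A secondary subtlety is justifying the integrations by parts and the finiteness of all integrals for a general non-negative solution $u$ of~\eqref{pde} satisfying only~\eqref{assumption}, i.e.\ the requisite decay/regularity so that $v\in H^1_{\rm loc}$, $v\to0$ at $\pm\infty$, and the one-dimensional eigenvalue problem for $-\partial_s^2-v^{p-2}$ is well posed with ground-state energy $\ge\Lambda$.
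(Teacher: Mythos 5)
Your decomposition is transposed relative to the one that actually works, and this is not mere bookkeeping: the key inequality in your first paragraph fails. Setting $v(s)=\big(\int_{\Sp^{N-1}}u^2\,d\omega\big)^{1/2}$ and averaging the equation over the sphere gives $-v\,v''+\Lambda\,v^2\le\int_{\Sp^{N-1}}u^p\,d\omega$, but Jensen gives $\int_{\Sp^{N-1}}u^p\,d\omega\ge v^p$, i.e.\ the \emph{wrong} direction to conclude $-v''+\Lambda v\le v^{p-1}$. The same wrong-way Jensen reappears if you instead take $W(s):=v^{-2}\int_{\Sp^{N-1}}u^p\,d\omega$ as the effective potential (so that $\lambda_1(W)\ge\Lambda$ does hold): then $\int_\R W^{\gamma+1/2}\,ds\ge\int_{\mathcal C}u^p\,ds\,d\omega$, again an inequality in the useless direction, so the Lieb--Thirring bound cannot be closed against~\eqref{assumption}. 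A decisive sanity check: your proposed chain $\Lambda^\gamma\le c_{\rm LT}(\gamma)\int_\R v^p\,ds\le c_{\rm LT}(\gamma)\int_{\mathcal C}u^p\le c_{\rm LT}(\gamma)\int_\R u_*^p\,ds=\Lambda^\gamma$ nowhere uses $\Lambda\le\Lambda_\star(p)$ (the last identity, \eqref{Lambdacondition}, holds for \emph{every} $\Lambda$, not only at $\Lambda_\star(p)$ as you suggest), so if it were valid it would prove symmetry for all $\Lambda$, contradicting the Felli--Schneider symmetry breaking for $\Lambda>\Lambda_{\rm FS}(p)$. Relatedly, once you have averaged over $\omega$ at the outset, the angular Dirichlet energy $\int_{\mathcal C}|Lu|^2$ has been discarded and there is no function on the sphere left to which the generalized Poincar\'e inequality could be applied, so the mechanism producing the threshold $\Lambda_\star(p)=(N-1)(6-p)/(4(p-2))$ is absent from your argument.

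The paper slices the other way. For each \emph{fixed} $\omega$ it applies the one-dimensional one-bound-state Lieb--Thirring inequality in $s$ with potential $u(\cdot,\omega)^{p-2}$ and your (correct) $\gamma=\frac12\frac{p+2}{p-2}$, which bounds $\int_\R(|\partial_su|^2-u^p)\,ds$ from below by $-c_{\rm LT}(\gamma)^{1/\gamma}\big(\int_\R u^p\,ds\big)^{1/\gamma}v(\omega)^2$ with $v(\omega):=\big(\int_\R u^2\,ds\big)^{1/2}$ now a function \emph{on the sphere}. Then a Schwarz inequality gives $\int_{\mathcal C}|Lu|^2\ge\int_{\Sp^{N-1}}|Lv|^2$, a H\"older inequality in $\omega$ produces $\big(\int_{\Sp^{N-1}}v^{q+1}\big)^{2/(q+1)}$ with $q=\frac{3p-2}{6-p}$, and the generalized Poincar\'e inequality \eqref{generalizedSobolev} absorbs this term into $\int_{\Sp^{N-1}}|Lv|^2$ precisely when $D\le\frac{N-1}{q-1}=\Lambda_\star(p)$; condition \eqref{assumption} gives $D\le\Lambda$, closing the chain. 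Finally, the equality analysis is more delicate than "$v=v_*$ and $Lu=0$": Lieb--Thirring equality only yields $\omega$-dependent parameters $A(\omega),B(\omega),C(\omega)$, and one must exploit the equality cases of the two H\"older steps (and, when $\Lambda=\Lambda_\star(p)$, of the Schwarz step together with the equation itself) to show these are constants. I would encourage you to redo the argument with $v$ defined on $\Sp^{N-1}$ rather than on $\R$.
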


\begin{remark}
Up to the multiplication by a constant, an optimal function for \eqref{3.3} solves \eqref{pde} so that $C^N_{a,b}=\nrmC up^{2-p}\,$. Requiring \eqref{assumption} is then natural if we look for extremal functions. Furthermore, notice that $C^N_{a,b}$ is bounded from below by the optimal constant for the inequality restricted to symmetric functions.

The symmetry result of Theorem \ref{thm:mainfinal} is in fact a uniqueness statement for the Euler-Lagrange equations associated with the variational problem~\eqref{3.3}, under an energy condition. It has been proved in \cite{Catrina-Wang-01} that optimal functions exist for the sharp Caffarelli-Kohn-Nirenberg inequalities. Hence
Theorem~\ref{main} (or equivalently Theorem~\ref{main-cylinder}) is a consequence of the stronger Theorem~\ref{thm:mainfinal}.
\end{remark}

A few comments about the strategy of the proof are in order. Equation~\eqref{eq:onedim} can be viewed as a Schr\"odinger equation with $V_*:=u_*^{p-2}$ as a potential and~$-\Lambda$ as the smallest eigenvalue. It can be readily solved and yields, up to translations, the function $u_*$ given by~\eqref{ustar} and the potential
\[\label{sol:onedim}
V_*(s):=\frac{A^{p-2}}{|\cosh(B\,s)|^2}\quad\forall\;s\in\R\;,
\]
with $A\,$, $B$ given by \eqref{AB}.
This function can be viewed as the solution for the single bound state Lieb-Thirring inequality in Lemma~\ref {thm:lt} and Corollary~\ref{Cor:6}. Up to a translation and a multiplication by a positive constant, $V_*$ is the unique optimizing potential for Inequality~\eqref{inequ:lt} for
\[
\gamma=\frac12\,\frac{p+2}{p-2}\;,
\]
if the normalization is chosen such that the optimal eigenvalue is given by~$-\Lambda\,$. Moreover, one can check that
\be{Lambdacondition}
c_{\rm LT}(\gamma)\int_\R V_*^{\gamma+\frac12}\;ds=\Lambda^\gamma\;.
\ee

\begin{proof}[Proof of Theorem~\ref{thm:mainfinal}] We will apply a number of inequalities, for which equality cases will be achieved. Let
\[
\mathcal F[u]:=\int_{\Sp^{N-1}}\int_\R\(|\partial_s u|^2-u^p\)\,ds\,d\omega+\int_{\mathcal C} |Lu|^2\;ds\,d\omega\;.
\]
If $u$ is a solution to~\eqref{pde}, then we have
\[
\mathcal F[u]=-\Lambda\int_{\mathcal C}|u(s,\omega)|^2\,ds\,d\omega\,.
\]
\subsubsection*{First step} It relies on Lemma~\ref{thm:lt}. With $V=u^{p-2}\,$, we find that a.e. in $\omega\,$,
\be{liebthirring}
\int_\R\(|\partial_s u(s,\omega)|^2 - |u(s,\omega)|^p\)\,ds\ge-\,c_{\rm LT}(\gamma)^{1/\gamma}\(\int_\R |u(s,\omega)|^p\,ds\)^{1/\gamma} |v(\omega)|^2\,,
\ee
with
\[
v(\omega):=\sqrt{\int_\R |u(s,\omega)|^2\;ds}\;.
\]
Hence we get
\begin{multline*}\mathcal F[u]\ge -\,c_{\rm LT}(\gamma)^{1/\gamma}\int_{\Sp^{N-1}}\(\int_\R |u(s,\omega)|^p\,ds\)^{1/\gamma}|v(\omega)|^2\,d\omega\\
+\int_{\mathcal C} |Lu(s,\omega)|^2\;ds\,d\omega\;.
\end{multline*}

\subsubsection*{Second step} Since
\[
L_\alpha v(\omega)=\frac{\int_\R u(s,\omega)\,L_\alpha u(s,\omega)\;ds }{\sqrt{\int_\R u(s,\omega)^2\,ds}}\;,
\]
it follows from Schwarz's inequality that
\be{convexity}
\int_{\mathcal C} |Lu(s,\omega)|^2\;ds\,d\omega\ge\int_{\Sp^{N-1}} |Lv(\omega)|^2\,d\omega\;.
\ee

\subsubsection*{Third step} Since $\gamma >1\,$, we apply H\"older's inequality,
\begin{multline}\label{Hoelder2p}
\int_{\Sp^{N-1}}\(\int_\R |u(s,\omega)|^p\,ds\)^\frac 1\gamma|v(\omega)|^2\,d\omega\\
\le\(\int_{\mathcal C }|u(s,\omega)|^p\,ds\,d\omega\)^\frac 1\gamma\(\int_{\Sp^{N-1}}|v(\omega)|^\frac{2\,\gamma}{\gamma-1}\,d\omega\)^\frac{\gamma-1}\gamma
\end{multline}
and thus, with
\[
D:= c_{\rm LT}(\gamma)^{1/\gamma}\(\int_{\mathcal C }u^p \;ds\,d\omega\)^\frac 1\gamma\,,
\]
we obtain
\[
\mathcal F[u]\ge\int_{\Sp^{N-1}} (Lv)^2\,d\omega - D\(\int_{\Sp^{N-1}} v^{\frac{2\,\gamma}{\gamma-1}}\,d\omega\)^\frac{\gamma-1}{\gamma}=:\mathcal E[v]\;.
\]

\subsubsection*{Fourth step} The generalized Poincar\'e inequality~\cite{BV-V,Beck} states that for all $q\in\big(1,\frac{N+1}{N-3}\big]\,$,
\be{generalizedSobolev}
\tfrac{q-1}{N-1}\int_{\Sp^{N-1}} (Lv)^2\,d\omega\ge\(\int_{\Sp^{N-1}} v^{q+1}\,d\omega\)^{\frac{2}{q+1}}-\int_{\Sp^{N-1}} v^2 \,d\omega\;.
\ee
Choosing
\[\label{quu}
q+1=\frac{2\,\gamma}{\gamma-1}=2\,\frac{p+2}{6-p}\;,
\]
\emph{i.e.} $q=\frac{3\,p-2}{6-p}\,$, we arrive at
\[
\mathcal E[v]\ge\(\tfrac{N-1}{q-1} -D\)\(\int_{\Sp^{N-1}} v^{q+1}\,d\omega\)^{\tfrac{2}{q+1}} -\tfrac{N-1}{q-1}\int_{\Sp^{N-1}} v^2 \,d\omega\;.
\]
Note that $q$ is in the appropriate range, since we may indeed notice that $q\le\frac{N+1}{N-3}$ is equivalent to $p\le\frac{2\,N}{N-2}\,$.

\subsubsection*{Fifth step} Using the fact that $d\omega$ is a probability measure, by H\"older's inequality, we get
\be{Hoelder}
\(\int_{\Sp^{N-1}} v^{q+1}\,d\omega\)^{\frac{2}{q+1}}\ge\int_{\Sp^{N-1}} v^2 \,d\omega \;.
\ee
Thus, if
\[
D\le\frac{N-1}{q-1}\;,
\]
we get
\[
\(\tfrac{N-1}{q-1} -D\)\(\int_{\Sp^{N-1}} v^{q+1}(\omega)\,d\omega\)^{\frac{2}{q+1}}\ge\(\tfrac{N-1}{q-1} -D\)\int_{\Sp^{N-1}} v^2\,d\omega\;,\label{qto2}
\]
that is,
\[
\mathcal E[v]\ge-D\int_{\Sp^{N-1}} v^2\,d\omega\;.
\]
By~\eqref{Lambdacondition} and~\eqref{assumption}, we know that
\[\label{ineq2}
D= c_{\rm LT}(\gamma)^\frac 1\gamma\(\int_{\mathcal C }u^p \;ds\,d\omega\)^\frac 1\gamma\le c_{\rm LT}(\gamma)^\frac 1\gamma\(\int_{\mathcal C }u_*^p\;ds\,d\omega\)^\frac 1\gamma=\Lambda\;.
\]
Thus, if
\[
\Lambda\le\frac{N-1}{q-1}=\frac{(N-1)\,(6-p)}{4\,(p-2)}=\Lambda_\star(p)\;,
\]
then $D\le\tfrac{N-1}{q-1}$
and the chain of inequalities
\[\label{bvvtwo}
-\Lambda\int_{\mathcal C} u^2\,ds\,d\omega=\mathcal F[u]\ge\mathcal E[v]\ge -D\int_{\mathcal C} u^2\,ds\,d\omega\ge -\Lambda\int_{\mathcal C} u^2\,ds\,d\omega
\]
shows that $D=\Lambda$ and equality holds at each step.

\medskip Now, let us investigate the consequences of such equalities:
\begin{enumerate}
\item By Lemma~\ref{thm:lt}, equality in~\eqref{liebthirring} yields the existence of three functions, $A\,$, $B$ and $C$ on $\Sp^{N-1}$ such that, for a.e.~$(s,\omega)\in\mathcal C\,$,
\[
u^{p-2}(s,\omega)=\frac{\big(A(\omega)\big)^{p-2}}{\big[\cosh\big(B(\omega)\,(s - C(\omega))\big)\big]^2}\;.
\]
Note that, since $\cosh(x)$ is an even function, we may choose $B(\omega)$ to be a non-negative function.
\item Equality in~\eqref{Hoelder2p} means that $v$ is proportional to $\(\int_\R |u(s,\omega)|^p\,ds\)^{1/p}$ on the sphere, so that
$B $ does not depend on $\omega\,$.
\item Equality in~\eqref{Hoelder} means that $v$ is constant on the sphere and, as a consequence, $A^2/ B$ does not depend on $\omega\,$.
\end{enumerate}
If $\Lambda<\Lambda_\star(p)$ there must be equality in~\eqref{Hoelder2p} and in~\eqref{Hoelder} and hence $A$ and $B$ do not depend on $\omega\,$.
Thus, $u$ satisfes the equation
\[
-\partial_s^2u + \Lambda u = u^{p-1} \quad\hbox{on}\quad{\mathcal C}\,.
\]
Since $u$ satisfies \eqref{pde} we have necessarily that $L^2 u=0\,$, which means that $C$ and hence $u$ are independent of $\omega\,$.

The problem is a bit trickier if $\Lambda=\Lambda_\star(p)$ and $p\le\frac{2\,N}{N-2}\,$. In this case equality in~\eqref{Hoelder} is not required. But, recall that equality in~\eqref{convexity} means that
for some $B(\omega)\,$, $C(\omega)\,$,
\[
\partial_s\log(u^{p-2} (s,\omega))= -2\,B(\omega)\,\tanh\big(B(\omega)\,(s-C(\omega))\big) :=f(s)
\]
does not depend on the variable $\omega\,$. As $s\to\infty\,$, $f(s)$ converges to $-2\,B(\omega)$ and hence $B(\omega)$ must be constant. Since $x\mapsto\tanh(x)$ is a strictly monotone function, $C(\omega)$ is also a constant. Let $X(s):=\cosh(B\,(s-C))^{-2/(p-2)}\,$. Since $u$ solves~\eqref{pde}, we find
\begin{multline*}
0=-\partial_s^2 u -L^2 u +\Lambda\,u -u^{p-1}\\
\textstyle =A\(\frac{2\,p\,B^2}{(p-2)^2}-A^{p-2}\)X^{2\,(p-1)}+\left[A\(\Lambda-\frac{4\,B^2}{(p-2)^2}\)-L^2A\right]X^2\;,
\end{multline*}
so that $A^{p-2}=\frac{2\,p\,B^2}{(p-2)^2}$ must be constant too. This is again enough to conclude that $A, B$ and $C$ do not depend on $\omega\,$.
\end{proof}

Next we state a rigidity result which is a consequence of the proof of Theorem~\ref{main-cylinder}. The connection with Theorem~\ref{thm:mainfinal} will be made clear in Section~\ref{Sec:LTN}.
\begin{corollary}\label{Cor:6} Let $N\ge 2\,$. Fix $\gamma>1$ such that $\gamma\ge\frac{N-1}2$ if $N\ge 4$ and let $q=\frac{\gamma+1}{\gamma-1}\,$. Further fix $D\le\frac{N-1}{q-1}\,$. Among all potentials $V=V(s,\omega)$ with
\[
c_{\rm LT}(\gamma)^\frac1\gamma\(\int_{\mathcal C} V^{\gamma +\frac12}\,ds\,d\omega\)^\frac1\gamma=D\;,
\]
the potential $V$ that minimizes the first eigenvalue of $-\partial_s^2-L^2-V$ on $L^2(\mathcal C, ds\,d\omega)$ does not depend on $\omega\,$. Moreover, $u=V^{(2\,\gamma-1)/4}$ is optimal for~\eqref{3.3}. \end{corollary}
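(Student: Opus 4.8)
The plan is to derive Corollary~\ref{Cor:6} by running the same chain of inequalities as in the proof of Theorem~\ref{thm:mainfinal}, but now starting from the spectral variational characterization rather than from the PDE. First I would fix a potential $V=V(s,\omega)\ge 0$ with the prescribed normalization $c_{\rm LT}(\gamma)^{1/\gamma}\big(\int_{\mathcal C}V^{\gamma+1/2}\,ds\,d\omega\big)^{1/\gamma}=D$, and let $-\lambda_1(V)$ be the lowest eigenvalue of $-\partial_s^2-L^2-V$ on $L^2(\mathcal C,ds\,d\omega)$, with $u\ge0$ the corresponding ground state (positivity of the ground state being standard). Writing the Rayleigh quotient, $-\lambda_1(V)\int_{\mathcal C}u^2 = \int_{\mathcal C}\big(|\partial_s u|^2+|Lu|^2 - V\,u^2\big)\,ds\,d\omega$. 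Applying Lemma~\ref{thm:lt} in the variable $s$ for a.e.\ $\omega$, with the bound $\int_\R V\,u^2\,ds$ controlled by $c_{\rm LT}(\gamma)^{1/\gamma}\big(\int_\R V^{\gamma+1/2}\,ds\big)^{1/\gamma}\,\lambda_1(\omega)$ where $\lambda_1(\omega)$ is the lowest eigenvalue of $-\partial_s^2-V(\cdot,\omega)$, one reaches exactly the one-dimensional estimate that in the proof of Theorem~\ref{thm:mainfinal} is applied with $V=u^{p-2}$; the only difference is that here $V$ is a genuinely independent object. This produces, with $v(\omega)^2:=\int_\R u(s,\omega)^2\,ds$, the inequality $-\lambda_1(V)\int_{\mathcal C}u^2 \ge \mathcal E[v]$ where $\mathcal E[v]$ is the same functional on the sphere, after the Schwarz step~\eqref{convexity} and the H\"older step~\eqref{Hoelder2p}, with the very same constant $D$.

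Next I would invoke the generalized Poincar\'e inequality~\eqref{generalizedSobolev} on $\Sp^{N-1}$ with the exponent $q=\frac{\gamma+1}{\gamma-1}$ (checking this lies in the admissible range $\big(1,\frac{N+1}{N-3}\big]$ precisely under the stated hypothesis $\gamma\ge\frac{N-1}{2}$ when $N\ge4$, and automatically when $N=2,3$), followed by the probability-measure H\"older inequality~\eqref{Hoelder}, exactly as in Steps four and five. Since $D\le\frac{N-1}{q-1}$ by assumption, these combine to give $\mathcal E[v]\ge -D\int_{\Sp^{N-1}}v^2\,d\omega = -D\int_{\mathcal C}u^2\,ds\,d\omega$. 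Hence $-\lambda_1(V)\int_{\mathcal C}u^2 \ge -D\int_{\mathcal C}u^2$, i.e.\ $\lambda_1(V)\le D$. On the other hand, testing with the $\omega$-independent optimizer: the potential $V_*(s)$ from Lemma~\ref{thm:lt}, rescaled so that $c_{\rm LT}(\gamma)^{1/\gamma}\big(\int_{\mathcal C}V_*^{\gamma+1/2}\big)^{1/\gamma}=D$, has lowest eigenvalue exactly $D$ for $-\partial_s^2-V_*$ on $L^2(\R,ds)$, hence also for $-\partial_s^2-L^2-V_*$ on $L^2(\mathcal C,ds\,d\omega)$ (the $L^2$ part contributes nothing to the $s$-only ground state). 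So the minimal first eigenvalue over all admissible $V$ equals $D$, and it is attained precisely when every inequality above is an equality.

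Finally I would extract the rigidity from the equality cases, mirroring the end of the proof of Theorem~\ref{thm:mainfinal}: equality in the one-dimensional Lieb-Thirring bound forces $V(s,\omega)=A(\omega)^{p-2}\cosh\big(B(\omega)(s-C(\omega))\big)^{-2}$-type profiles (in the $\gamma$ normalization, $V(s,\omega)=\beta(\omega)\,\cosh(\sqrt{\beta(\omega)}\,\mu(\omega)\,(s-C(\omega)))^{-2}$ for appropriate functions), equality in~\eqref{convexity} forces $\partial_s\log V$ to be $\omega$-independent, which as $s\to\infty$ pins down the decay rate and hence the scale, and equality in~\eqref{Hoelder2p}, together with~\eqref{Hoelder} when $D<\frac{N-1}{q-1}$, forces the amplitude to be constant in $\omega$; the borderline case $D=\frac{N-1}{q-1}$ is handled exactly as in Theorem~\ref{thm:mainfinal} using that $u$ solves the associated Euler-Lagrange equation to eliminate the remaining angular dependence. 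Setting $\gamma=\frac12\frac{p+2}{p-2}$ identifies $q=\frac{3p-2}{6-p}$ and $D=\Lambda$ with the parameters of Theorem~\ref{main-cylinder} via~\eqref{Lambdacondition}, and the relation $u=V^{(2\gamma-1)/4}$ comes from matching $V=u^{p-2}$ with $\gamma+\frac12=\frac{p}{p-2}$, so that $u$ solving~\eqref{eq:onedim} is the corresponding extremal of~\eqref{3.3}. The main obstacle I anticipate is the bookkeeping that makes the bound $\int_\R V\,u^2\,ds\le c_{\rm LT}(\gamma)^{1/\gamma}\big(\int_\R V^{\gamma+1/2}\,ds\big)^{1/\gamma}v(\omega)^2$ sharp in the right way — i.e.\ checking that the variational (ground-state) quantity $\int_\R(|\partial_s u|^2-V u^2)\,ds$ is bounded below by $-\lambda_1(\omega)\,v(\omega)^2$ with $\lambda_1(\omega)$ obeying the Lieb-Thirring estimate, rather than comparing to an unrelated eigenvalue — but this is immediate from the min-max characterization of $\lambda_1(\omega)$ using $u(\cdot,\omega)$ as a trial function. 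The rest is a faithful transcription of the argument already carried out.
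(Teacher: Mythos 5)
Your argument is correct and is essentially the paper's intended one: the paper leaves the proof of Corollary~\ref{Cor:6} implicit, presenting it as ``a consequence of the proof of Theorem~\ref{main-cylinder}'' via the equivalence of Lemma~\ref{Lem:LT}, and your direct re-run of the same chain (one-dimensional one-bound-state Lieb--Thirring for a.e.~$\omega$, Schwarz, H\"older, generalized Poincar\'e, H\"older) on the Rayleigh quotient of the ground state, followed by the identical equality analysis, is exactly that argument transcribed to the spectral setting. The identifications $\gamma=\tfrac12\tfrac{p+2}{p-2}$, $q=\tfrac{\gamma+1}{\gamma-1}$, $D=\Lambda$ and $V=u^{4/(2\gamma-1)}$ at equality all check out, so nothing further is needed.
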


\section{Interpolation and one-bound state Lieb-Thirring inequalities in higher dimensions}\label{Sec:LTN}

As a straightforward consequence of their definitions, both $C(\Lambda,p,N)$ and $C^*(\Lambda,p,N)$ are monotone non-increasing functions of $\Lambda$ and we have
\be{K-Kstar}
C(\Lambda,p,N)\ge C^*(\Lambda,p,N)\;,
\ee
where
\[
C^*(\Lambda,p,N)=|\Sp^{N-1}|^{-\frac{p-2}p} {\textstyle\left[\frac{\Lambda\,(p-2)^2}{p+2}\right]^\frac{p-2}{2\,p}\!\left[\frac{p+2}{2\,p\,\Lambda}\right]\!\left[\frac 4{p+2}\right]^\frac{6-p}{2\,p}} \left[\tfrac{\Gamma\(\frac{2}{p-2}+\frac 12\)}{\sqrt\pi\;\Gamma\(\frac{2}{p-2}\)}\right]^\frac{p-2}p \kern -8pt
\]
according, \emph{e.g.,} to \cite{Chou-Chu-93,MR1731336,DELT09,DDFT}. We observe that
\[
C^*(\Lambda,p,N)=C^*(1,p,N)\,\Lambda^{-\frac{p+2}{2\,p}}\,,
\]
so that $\lim_{\Lambda\to 0_+}C^*(\Lambda,p,N)=\infty\,$. From~\cite[Theorem 1.2, (ii) and Theorem 7.6, (ii)]{Catrina-Wang-01}, we know that for any $p\in(2,\frac{2\,N}{N-2})$ if $N\ge 3\,$, and any $p>2$ if $N=2\,$,
\[
\lim_{\Lambda\to\infty}\frac{\Lambda^{a_c-\frac Np}}{C(\Lambda,p,N)}=\inf_{w\in\mathrm H^1(\R^d)\setminus\{0\}}\frac{\int_{\R^d}\(|\nabla u|^2+|u|^2\)\,dx}{\(\int_{\R^d}|u|^p\,dx\)^{2/p}}\;,
\]
so that
\[
\lim_{\Lambda\to\infty}C(\Lambda,p,N)=0\;.
\]
With these observations in hand and $\gamma=\frac 12\,\frac{p+2}{p-2}\,$, we can define
\[
\Lambda_\gamma^N(\mu):=\inf\left\{\Lambda>0\,:\,\mu^\frac{2\,\gamma}{2\,\gamma+1}=1/C(\Lambda,p,N)\right\}\;.
\]
If $N=1\,$, we observe that $C(\Lambda,p,1)=C^*(\Lambda,p,1)\,$, so that $\Lambda_\gamma^1(\mu)=\Lambda_\gamma^1(1)\,\mu$ and $\Lambda_\gamma^1(1)=C^*(1,p,N)^\frac{2\,p}{p+2}\,$.

With $\gamma=\frac 12\,\frac{p+2}{p-2}\,$, notice that the condition $p\in(2,6)$ means $\gamma\in(1,\infty)$ while the condition $p\le\frac{2\,N}{N-2}$ means $\gamma\ge\frac{N-1}2\,$.

Next, consider on $\mathcal C=\R\times\Sp^{N-1}$ the Schr\"odinger operator $-\partial_s^2-L^2-V $ and denote by $-\lambda_1(V)$ its lowest eigenvalue. We assume that $V$ is non-negative, so that $\lambda_1(V)\,$, if it exists, is non-negative. The main point of this section is that $\lambda_1(V)$ can be estimated using $\Lambda_\gamma^N(\mu)$ provided $V$ is controlled in terms of $\mu\,$. The Gagliardo-Nirenberg-Sobolev inequality \eqref{3.3} on $\mathcal C$ is equivalent to the following one-bound state version of the Lieb-Thirring inequality.
\begin{lemma}\label{Lem:LT} For any $\gamma\in(2,\infty)$ if $N=1\,$, or for any $\gamma\in(1,\infty)$ such that $\gamma\ge\frac{N-1}2$ if $N\ge 2\,$, if $V$ is a non-negative potential in $\mathrm L^{\gamma+\frac 12}(\mathcal C)\,$, then the operator $-\partial^2-L^2-V$ has at least one negative eigenvalue, and its lowest eigenvalue, $-\lambda_1(V)\,$, satisfies
\be{estimatelambda1}
\lambda_1(V)\le\Lambda_\gamma^N(\mu)\quad\mbox{with}\quad\mu=\mu(V):=\(\int_{\mathcal C}V^{\gamma+\frac12}\,ds\,d\omega\)^\frac 1\gamma\,.
\ee
Moreover, equality is achieved if and only if the eigenfunction $u$ corresponding to $\lambda_1(V)$ satisfies $u=V^{(2\,\gamma-1)/4}$ and $u$ is optimal for \eqref{3.3}.\end{lemma}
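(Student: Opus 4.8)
The plan is to establish Lemma~\ref{Lem:LT} as a direct translation of the interpolation inequality \eqref{3.3} into spectral language, using the standard variational characterization of the lowest eigenvalue together with the scaling identity $C^*(\Lambda,p,N)=C^*(1,p,N)\,\Lambda^{-(p+2)/(2p)}$ and the definition of $\Lambda_\gamma^N(\mu)$. First I would fix $\gamma$ in the stated range, set $p=\frac{2\gamma+1}{\gamma-?}$—more precisely invert $\gamma=\tfrac12\tfrac{p+2}{p-2}$ to get $p=\frac{2(2\gamma+1)}{2\gamma-1}$, which lies in $(2,6)$ for $\gamma>1$ and satisfies $p\le\frac{2N}{N-2}$ exactly when $\gamma\ge\frac{N-1}{2}$—so that \eqref{3.3} is available with optimal constant $C(\Lambda,p,N)$ for every $\Lambda>a_c^2$, and by the continuity/monotonicity and limiting behavior recorded just above the lemma ($C(\Lambda,p,N)\to0$ as $\Lambda\to\infty$, $C^*\to\infty$ as $\Lambda\to0_+$, hence $C\to\infty$), the infimum defining $\Lambda_\gamma^N(\mu)$ is attained and the equation $\mu^{2\gamma/(2\gamma+1)}=1/C(\Lambda_\gamma^N(\mu),p,N)$ holds.

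The core computation is the following: for a nonnegative $V\in\mathrm L^{\gamma+1/2}(\mathcal C)$ and any trial function $\varphi\in H^1(\mathcal C)$, the Rayleigh quotient gives
\[
-\lambda_1(V)\le\frac{\int_{\mathcal C}\(|\partial_s\varphi|^2+|L\varphi|^2\)\,ds\,d\omega-\int_{\mathcal C}V\,\varphi^2\,ds\,d\omega}{\int_{\mathcal C}\varphi^2\,ds\,d\omega}.
\]
Applying Hölder with exponents $\gamma+\tfrac12$ and its conjugate to $\int V\varphi^2$ yields $\int_{\mathcal C}V\varphi^2\le\mu^{\gamma/(2\gamma+1)}\(\int_{\mathcal C}|\varphi|^{p}\,ds\,d\omega\)^{2/p}$, since the conjugate exponent of $\gamma+\tfrac12$ applied to $\varphi^2$ produces the power $p=\frac{2(2\gamma+1)}{2\gamma-1}$. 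Combining with \eqref{3.3} in the form $\(\int|\varphi|^p\)^{2/p}\le C(\Lambda,p,N)\(\int(|\partial_s\varphi|^2+|L\varphi|^2)+\Lambda\int\varphi^2\)$ and optimizing over $\varphi$, one finds that $-\lambda_1(V)$ is bounded above by $\mu^{\gamma/(2\gamma+1)}C(\Lambda,p,N)\Lambda-\Lambda$ divided by the appropriate factor—the cleanest route is to observe that $-\lambda_1(V)+\Lambda\le\mu^{\gamma/(2\gamma+1)}C(\Lambda,p,N)\,(-\lambda_1(V)+\Lambda)$ whenever $-\lambda_1(V)$ is the eigenvalue, using the eigenfunction itself as trial function, so that $\mu^{\gamma/(2\gamma+1)}C(\Lambda,p,N)\ge1$, i.e. $1/C(\Lambda,p,N)\le\mu^{\gamma/(2\gamma+1)}=\mu^{2\gamma/(2\gamma+1)\cdot1/2}$—I will need to match the exponent $\frac{2\gamma}{2\gamma+1}$ in the definition of $\Lambda_\gamma^N$, so the bookkeeping is $\int V\varphi^2\le\mu^{\gamma/(2\gamma+1)}\|\varphi\|_p^2$ and then $\Lambda=-\lambda_1(V)$ gives $1\le\mu^{\gamma/(2\gamma+1)}C(\lambda_1(V),p,N)$. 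Since $C(\cdot,p,N)$ is non-increasing, this forces $\lambda_1(V)\le\Lambda_\gamma^N(\mu)$ by the very definition of the latter as the infimum of $\Lambda$ with $\mu^{2\gamma/(2\gamma+1)}\ge1/C(\Lambda,p,N)$. Existence of a negative eigenvalue follows because the bound shows the bottom of the spectrum is pushed below zero once $V\not\equiv0$; more carefully, one exhibits a compactly supported trial function making the Rayleigh quotient negative, using that $\mathcal C$ has a one-dimensional factor so the one-bound-state mechanism of Lemma~\ref{thm:lt} applies.

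For the equality case, the plan is to trace back through the two inequalities used. Equality forces (i) equality in Hölder for $\int V\varphi^2\le\mu^{\gamma/(2\gamma+1)}\|\varphi\|_p^2$ with $\varphi$ the ground state $u$, which means $V$ is proportional to $u^{p-2}$, and since $p-2=\frac{4}{2\gamma-1}$ this is $V=c\,u^{4/(2\gamma-1)}$, equivalently (after fixing the normalization by the constraint on $\int V^{\gamma+1/2}$) $u=V^{(2\gamma-1)/4}$ up to the positive multiplicative constant; and (ii) equality in \eqref{3.3} for $\varphi=u$, i.e. $u$ is optimal for the Gagliardo-Nirenberg-Sobolev inequality on $\mathcal C$. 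Conversely, if $u=V^{(2\gamma-1)/4}$ is an optimizer of \eqref{3.3}, reversing the chain gives equality in \eqref{estimatelambda1}. I expect the only genuinely delicate point to be the exponent bookkeeping—verifying that the conjugate Hölder exponent of $\gamma+\tfrac12$ acting on $\varphi^2$ is exactly $p/2$ with $p=p(\gamma)$ in the admissible range, and that the power $\frac{2\gamma}{2\gamma+1}$ in the definition of $\Lambda_\gamma^N(\mu)$ matches what comes out of the computation—together with confirming that $\Lambda_\gamma^N(\mu)$ is finite and the defining infimum is attained, which rests on the monotonicity and the two limiting statements for $C(\Lambda,p,N)$ quoted from \cite{Catrina-Wang-01} just above the lemma. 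The rest is a routine Rayleigh-quotient argument.
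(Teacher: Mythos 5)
Your proposal follows essentially the same route as the paper: the variational characterization of $-\lambda_1(V)$, H\"older's inequality applied to $\int_{\mathcal C}V\,u^2$ with conjugate exponents $\gamma+\tfrac12$ and $\tfrac{2\gamma+1}{2\gamma-1}$ (so that $p=2\,\tfrac{2\gamma+1}{2\gamma-1}$), the interpolation inequality~\eqref{3.3}, and the definition of $\Lambda_\gamma^N(\mu)$, with the equality case traced back through the H\"older equality condition $V^{\gamma+\frac12}\propto|u|^p$ and optimality in~\eqref{3.3}. The only flaw is the recurring exponent $\mu^{\gamma/(2\gamma+1)}$, which should be $\mu^{2\gamma/(2\gamma+1)}$ (since $\int_{\mathcal C}V^{\gamma+\frac12}=\mu^\gamma$ is raised to the power $\tfrac2{2\gamma+1}$ in H\"older); with that correction the bookkeeping matches the definition of $\Lambda_\gamma^N(\mu)$ exactly, as you anticipated.
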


\begin{proof}[Proof of Lemma~\ref{Lem:LT}] Let
\[
\mathcal Q[u,V]:=\frac{\int_{\mathcal C}|\nabla u|^2\,ds\,d\omega-\int_{\mathcal C}V\,|u|^2\,ds\,d\omega}{\int_{\mathcal C}|u|^2\,ds\,d\omega}
\]
so that
\[
-\lambda_1(V)=\inf_{u\in \mathrm H^1(\mathcal C)\setminus\{0\}}\mathcal Q[u,V]
\]
is achieved by some function $u\in H^1(\mathcal C)$ such that $\nrmC u2=1\,$. Using H\"older's inequality, we find that
\[
\int_{\mathcal C}V\,|u|^2\,ds\,d\omega\le\mu^\frac{2\,\gamma}{2\,\gamma+1}\,\nrmC up^2\,,
\]
with $p=2\,\frac{2\,\gamma+1}{2\,\gamma-1}$ and $\mu=\mu(V)\,$, and equality in the above inequality holds if and only if
\be{LT-CKN:EqualityCase}
V^{\gamma+\frac 12}=\kappa\,|u|^p
\ee
for some $\kappa>0\,$, which is actually such that $\kappa\,\nrmC up^p=\mu^\gamma\,$. Then we have found that
\[
\mathcal Q[u,V]\ge\nrmC{\nabla u}2^2-\mu^\frac{2\,\gamma}{2\,\gamma+1}\,\nrmC up^2\,.
\]
By definition of $\Lambda_\gamma^N(\mu)$ and \eqref{3.3}, we get that, for all $u\,$,
\[
\mathcal Q[u,V]\ge-\Lambda_\gamma^N(\mu)\;,
\]
thus proving \eqref{estimatelambda1} and
\be{CKN-LT}
-\Lambda_\gamma^N(\mu)\;=\!\!\!\!\!\!\!\inf_{\begin{array}{c}V\in\mathrm L^{\gamma+\frac 12}(\mathcal C)\cr\int_{\mathcal C}V^{\gamma+\frac12}\,ds\,d\omega=\mu^\gamma\end{array}}\hspace*{-12pt}\inf_{u\in \mathrm H^1(\mathcal C)\setminus\{0\}}\mathcal Q[u,V]
\ee
where equality holds if and only if \eqref{LT-CKN:EqualityCase} holds and $u$ is optimal for~\eqref{3.3}. This concludes the proof. \end{proof}

\subsubsection*{ A symmetry result for the one-bound state Lieb-Thirring inequality}\label{Sec:LTsym}

It is remarkable that optimality in~\eqref{estimatelambda1} is equivalent to optimality in~\eqref{3.3}. As a non trivial consequence of the above considerations and of Theorem~\ref{main-cylinder}, symmetry results for interpolation inequalities are also equivalent to symmetry results for the one-bound state Lieb-Thirring inequality in the cylinder.
\begin{corollary}\label{Cor:Lieb-Thirring} Let $N\ge 2\,$. For all $\gamma>1$ such that $\gamma\ge\frac{N-1}2$ if $N\ge 4\,$, if $V$ is a non-negative potential in $\mathrm L^{\gamma+\frac 12}(\mathcal C)$ such that
\[
\mu(V):=\(\int_{\mathcal C}V^{\gamma+\frac12}\,ds\, \mu(d\omega)\)^\frac 1\gamma\le\frac{\Lambda_\star(p)}{C^*(1,p,N)^\frac{2\,p}{p+2}}\quad\mbox{with}\quad p=2\,\frac{2\,\gamma+1}{2\,\gamma-1}\,,
\]
then, the lowest (non-positive) eigenvalue of $-\partial_s^2-L^2-V\,$, $-\lambda_1(V)\,$, satisfies
\[
\lambda_1(V)\le\Lambda_\gamma^1(1)\,\mu(V)\;.
\]
Moreover, equality in the above inequality is achieved by a potential $V$ which depends only on $s$ and $u=V^{(2\gamma-1)/4}$ is optimal for \eqref{3.3}. \end{corollary}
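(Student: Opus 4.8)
The plan is to deduce Corollary~\ref{Cor:Lieb-Thirring} by combining Lemma~\ref{Lem:LT} (the equivalence between the one-bound state Lieb-Thirring inequality on $\mathcal C$ and the Gagliardo-Nirenberg-Sobolev inequality \eqref{3.3}) with the symmetry statement of Theorem~\ref{main-cylinder}. First I would record the translation of notation: given $\gamma>1$ set $p=2\,\frac{2\,\gamma+1}{2\,\gamma-1}\in(2,6)$, which is precisely the relation $\gamma=\tfrac12\,\tfrac{p+2}{p-2}$ used throughout, and note that the condition $\gamma\ge\frac{N-1}2$ when $N\ge4$ is exactly the condition $p\le\frac{2\,N}{N-2}$ needed for the generalized Poincar\'e inequality (Fourth step of the proof of Theorem~\ref{thm:mainfinal}); for $N=2,3$ no such restriction is needed. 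By Lemma~\ref{Lem:LT}, for any admissible non-negative $V\in\mathrm L^{\gamma+1/2}(\mathcal C)$ the operator $-\partial_s^2-L^2-V$ has a lowest eigenvalue $-\lambda_1(V)$ with $\lambda_1(V)\le\Lambda_\gamma^N(\mu(V))$, and equality forces $u=V^{(2\gamma-1)/4}$ to be optimal for \eqref{3.3} and $V^{\gamma+1/2}$ proportional to $|u|^p$.

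The second step is to show that under the smallness hypothesis on $\mu(V)$ we actually have $\Lambda_\gamma^N(\mu(V))=\Lambda_\gamma^1(1)\,\mu(V)$, i.e. the multidimensional threshold collapses onto the one-dimensional one. Recall $\Lambda_\gamma^N(\mu)=\inf\{\Lambda>0:\mu^{2\gamma/(2\gamma+1)}=1/C(\Lambda,p,N)\}$ and $\Lambda_\gamma^1(1)=C^*(1,p,N)^{2p/(p+2)}$; since $C^*(\Lambda,p,N)=C^*(1,p,N)\,\Lambda^{-(p+2)/(2p)}$, the equation $\mu^{2\gamma/(2\gamma+1)}=1/C^*(\Lambda,p,N)$ is solved by $\Lambda=\Lambda_\gamma^1(1)\,\mu$. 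Now invoke Theorem~\ref{main-cylinder}: for $a_c^2<\Lambda\le\Lambda_\star(p)$ the extremals of \eqref{3.3} depend only on $s$, hence $C(\Lambda,p,N)=C^*(\Lambda,p,N)$ on that range. Therefore, as long as the candidate value $\Lambda=\Lambda_\gamma^1(1)\,\mu(V)$ satisfies $\Lambda\le\Lambda_\star(p)$, one has $C(\Lambda,p,N)=C^*(\Lambda,p,N)$ there and so $\Lambda_\gamma^N(\mu(V))=\Lambda_\gamma^1(1)\,\mu(V)$. But $\Lambda_\gamma^1(1)\,\mu(V)\le\Lambda_\star(p)$ is exactly the hypothesis $\mu(V)\le\Lambda_\star(p)/C^*(1,p,N)^{2p/(p+2)}$. (One should also check the candidate exceeds $a_c^2$; if it does not, the even stronger symmetry statement for $\Lambda\le a_c^2$ — where $C=C^*$ holds by the Felli-Schneider analysis being vacuous, or directly — still applies, so this is a minor point to address.) This yields $\lambda_1(V)\le\Lambda_\gamma^1(1)\,\mu(V)$, the claimed inequality.

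The third step is the equality/symmetry assertion. Attainment of $\lambda_1(V)=\Lambda_\gamma^1(1)\,\mu(V)$: take $V=V_*$, the one-dimensional optimal potential from Lemma~\ref{thm:lt}, rescaled so that \eqref{Lambdacondition} holds with $\Lambda=\Lambda_\gamma^1(1)\,\mu(V_*)$; then $u_*=V_*^{(2\gamma-1)/4}$ is the corresponding eigenfunction and, by construction, is optimal for \eqref{3.3} among $s$-dependent functions, which by Theorem~\ref{main-cylinder} is optimal outright — so equality is achieved and by a potential depending only on $s$. Conversely, if equality holds for some $V$, Lemma~\ref{Lem:LT} forces $u=V^{(2\gamma-1)/4}$ to be an extremal of \eqref{3.3} with $\Lambda$ in the range covered by Theorem~\ref{main-cylinder}, hence $u=u_*(\cdot-C)$ depends only on $s$, hence so does $V$. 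The main obstacle I anticipate is the careful bookkeeping at the endpoint and below $a_c^2$: verifying that $C(\Lambda,p,N)=C^*(\Lambda,p,N)$ on the full relevant $\Lambda$-interval (including $\Lambda\le a_c^2$, where \eqref{3.3} on the cylinder degenerates and the identification with the Euclidean Caffarelli-Kohn-Nirenberg picture must be handled separately), and confirming that the infimum defining $\Lambda_\gamma^N(\mu)$ is attained precisely at the value $\Lambda_\gamma^1(1)\,\mu$ rather than at some smaller $\Lambda$ — this uses the strict monotonicity of $\Lambda\mapsto C^*(\Lambda,p,N)$ and the monotonicity of $C(\Lambda,p,N)$ together with the equality $C=C^*$ on the range, so it is routine but must be stated cleanly.
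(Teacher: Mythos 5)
Your proposal is correct and follows essentially the same route as the paper: combine Lemma~\ref{Lem:LT} with the identity $\Lambda_\gamma^N(\mu)=\Lambda_\gamma^1(1)\,\mu$, which holds because $C(\Lambda,p,N)=C^*(\Lambda,p,N)$ on the range $\Lambda\le\Lambda_\star(p)$ by Theorem~\ref{main-cylinder}, the hypothesis on $\mu(V)$ being exactly the translation of that range. Your extra care about the equality case and about $\Lambda\le a_c^2$ (where symmetry is classical) fills in details the paper explicitly leaves to the reader.
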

\begin{proof} Based on the definition of $\Lambda_\gamma^N(\mu)$ and \eqref{K-Kstar}, we know that
\[
\mu^\frac{2\,\gamma}{2\,\gamma+1}=\frac 1{C(\Lambda_\gamma^N(\mu),p,N)}\le\frac 1{C^*(\Lambda_\gamma^N(\mu),p,N)}=\frac 1{C^*(1,p,N)}\, \(\Lambda_\gamma^N(\mu)\)^\frac{p+2}{2\,p}\;.
\]
We observe that $\gamma=\frac 12\,\frac{p+2}{p-2}$ means $\frac{2\,\gamma}{2\,\gamma+1}=\frac{p+2}{2\,p}$ and hence
\[
\Lambda_\gamma^N(\mu)\ge\big(C^*(1,p,N)\big)^\frac{2\,p}{p+2}\mu\;.
\]
However, there is equality in the above inequality as long as $\Lambda_\gamma^N(\mu)\le\Lambda_\star(p)$ (see \eqref{CKN-LT} and Theorem \ref{thm:mainfinal}). Since $\mu\mapsto\Lambda_\gamma^N(\mu)$ is monotone increasing, requiring $\Lambda_\gamma^N(\mu)\le\Lambda_\star(p)$ is equivalent to asking $\mu\le\Lambda_\star(p)\,C^*(1,p,N)^{-2\,p/(p+2)}\,$. Then the optimality in \eqref{3.3} is achieved among symmetric functions, by Theorem~\ref{main-cylinder}. This completes the proof. Details are left to the reader.\end{proof}

\section{Beyond symmetry and symmetry breaking: getting estimates for the non-radial optimal constants}\label{Sec:Beyond}

Caffarelli-Kohn-Nirenberg inequalities are actually much more general than the ones considered in Section~\ref{sect1} and in view of previous results (see for instance~\cite{DDFT,1005,Dolbeault-Esteban-Tarantello-08}) it is very natural to consider another family of interpolation inequalities, which can be introduced as follows.

Define the exponent
\[
\vartheta(p,N):=N\,\frac{p-2}{2\,p}
\]
and recall that $a_c:=\frac{N-2}2\,,$ $\Lambda(a):=(a-a_c)^2$ and $p(a,b):=\frac{2\,N}{N-2+2\,(b-a)}$. We shall also set $2^*:=\frac{2\,N}{N-2}$ if $N\ge 3$ and $2^*:=\infty$ if $N=1$ or~Ê$\,2\,$. For any $a<a_c\,$, we consider the following \emph{Caffarelli-Kohn-Nirenberg inequalities}, which were introduced in~\cite{Caffarelli-Kohn-Nirenberg-84} (also see~\cite{DDFT}):

\emph{Let $b\in(a+1/2,a+1]$ and $\theta\in(1/2,1]$ if $N=1\,$, $b\in(a,a+1]$ if $N=2$ and $b\in[a,a+1]$ if $N\ge3\,$. Assume that $p=p(a,b)\,$, and $\theta\in[\vartheta(p,N),1]$ if $N\ge2\,$. Then, there exists a finite positive constant $\mathsf K_{\rm CKN}(\theta,\Lambda,p)$ with $\Lambda=\Lambda(a)$ such that, for any $u\in C_c^\infty(\R^N\setminus\{0\})\,$,
\be{CKNtheta}
\nrm{\,|x|^{-b}\,u}p^2\le\frac{\mathsf K_{\rm CKN}(\theta,\Lambda,p)}{|\Sp^{N-1}|^\frac{p-2}p\,}\,\nrm{\,|x|^{-a}\,\nabla u}2^{2\,\theta}\,\nrm{\,|x|^{-(a+1)}\,u}2^{2\,(1-\theta)}\;.
\ee}

\noindent We denote by $\mathsf K_{\rm CKN}^*(\theta,\Lambda,p)$ the best constant among all radial functions. We recall that this constant is explicit and equal to
\[
\mathsf K_{\rm CKN}^*(\theta,\Lambda,p)=\mathsf K_{\theta,p}^*\,\Lambda^{-\frac{(2\,\theta-1)\,p+2}{2\,p}}
\]
where
\[
\mathsf K_{\theta,p}^*:=\textstyle\left[\frac{(p-2)^2}{(2\,\theta-1)\,p+2}\right]^\frac{p-2}{2\,p} \left[\frac{(2\,\theta-1)\,p+2}{2\,p\,\theta}\right]^\theta \left[\frac 4{p+2}\right]^\frac{6-p}{2\,p}\left[\frac{\Gamma\left(\frac{2}{p-2}+\frac 12\right)}{\sqrt\pi\;\Gamma\left(\frac{2}{p-2}\right)}\right]^\frac{p-2}p
\]
according to \cite[Lemma 3]{DDFT}. In the special case $\theta=1\,$, we have
\[
\mathsf K_{\rm CKN}^*(1,\Lambda,p)= \left|\Sp^{N-1}\right|^\frac{p-2}p\,C^N_{a,b}\;.
\]
Define the function $\mathfrak C$ by
\begin{multline*}
\mathfrak C(p,\theta):=\tfrac{(p+2)^\frac{p+2}{(2\,\theta-1)\,p+2}}{(2\,\theta-1)\,p+2}\,\(\tfrac{2-p\,(1-\theta)}2\)^{2\,\frac{2-p\,(1-\theta)}{(2\,\theta-1)\,p+2}}\\
\cdot\(\frac{\Gamma(\frac p{p-2})}{\Gamma(\frac{\theta\,p}{p-2})}\)^\frac{4\,(p-2)}{(2\,\theta-1)\,p+2}\,\(\frac{\Gamma(\frac{2\,\theta\,p}{p-2})} {\Gamma(\frac{2\,p}{p-2})}\)^\frac{2\,(p-2)}{(2\,\theta-1)\,p+2}\;.
\end{multline*}
Notice that $\mathfrak C(p,\theta)\ge 1$ and $\mathfrak C(p,\theta)=1$ if and only if $\theta=1\,$.
\begin{theorem}\label{thm:estimates} With the above notations, for any $N\ge 3\,$, any $p\in(2,2^*)$ and any $\theta\in[\vartheta(p,N),1)\,$, we have the estimate
\[
\mathsf K_{\rm CKN}^*(\theta,\Lambda,p)\le\mathsf K_{\rm CKN}(\theta,\Lambda,p)\le\mathsf K_{\rm CKN}^*(\theta,\Lambda,p)\,\mathfrak C(p,\theta)^\frac{(2\,\theta-1)\,p+2}{2\,p}
\]
under the condition
\be{Cdt:Lambda}
a_c^2<\Lambda\le\frac{(N-1)}{\mathfrak C(p,\theta)}\,\frac{(2\,\theta-3)\,p+6}{4\,(p-2)}\;.
\ee
\end{theorem}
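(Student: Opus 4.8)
\emph{Plan of proof.} The plan is to carry Inequality~\eqref{CKNtheta} to the cylinder $\mathcal C=\R\times\Sp^{N-1}$ through the Emden-Fowler change of variables and then to repeat, step by step, the scheme of Section~\ref{Sec:Proof}, with the one-bound state Lieb-Thirring inequality of Lemma~\ref{thm:lt} replaced by the \emph{sharp one-dimensional Caffarelli-Kohn-Nirenberg inequality of exponent $\theta$}. After the change of variables, \eqref{CKNtheta} becomes the statement that, for every $u\in C_c^\infty(\mathcal C)\,$,
\[
\nrmC up^2\le\frac{\mathsf K_{\rm CKN}(\theta,\Lambda,p)}{|\Sp^{N-1}|^{\frac{p-2}p}}\,\(\nrmC{\nabla u}2^2+\Lambda\,\nrmC u2^2\)^\theta\,\nrmC u2^{2(1-\theta)}\,,
\]
where $\nabla$ is the full gradient on $\mathcal C$ and $\Lambda=(a-a_c)^2\,$; the best constant restricted to functions depending on $s$ alone equals $\mathsf K^*_{\rm CKN}(\theta,\Lambda,p)=\mathsf K_{\theta,p}^*\,\Lambda^{-\frac{(2\,\theta-1)\,p+2}{2\,p}}$ (up to the normalization by $|\Sp^{N-1}|$), attained, up to scalings and translations in $s\,$, by the profile $u_*$ of~\eqref{ustar}; see~\cite[Lemma~3]{DDFT} and Section~\ref{Sec:Beyond}. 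The lower bound $\mathsf K^*_{\rm CKN}(\theta,\Lambda,p)\le\mathsf K_{\rm CKN}(\theta,\Lambda,p)$ is immediate, since it only amounts to restricting the infimum to radial functions.

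For the upper bound I would argue as in the proof of Theorem~\ref{thm:mainfinal}. Fiberwise in $\omega\in\Sp^{N-1}\,$, the sharp one-dimensional $\theta$-inequality applied to $u(\cdot,\omega)$ bounds $\int_\R|u(s,\omega)|^p\,ds$ by a product of $\big(\int_\R|\partial_s u|^2\,ds+\Lambda\,v(\omega)^2\big)^\theta$ and $v(\omega)^{2(1-\theta)}\,$, where $v(\omega):=\big(\int_\R u(s,\omega)^2\,ds\big)^{1/2}\,$; this is the analogue of the first step there. The Schwarz inequality~\eqref{convexity} then gives $\int_{\mathcal C}|Lu|^2\,ds\,d\omega\ge\int_{\Sp^{N-1}}|Lv|^2\,d\omega\,$, so the angular energy controls the oscillation of $v$ over the sphere. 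A Hölder inequality in $\omega$ regroups the mixed quantities, after which the generalized Poincaré inequality~\eqref{generalizedSobolev} of \cite{BV-V,Beck} on $\Sp^{N-1}$ — admissible because $p<2^*$ keeps the relevant exponent $q$ in the range $\big(1,\frac{N+1}{N-3}\big]$ — absorbs the remaining angular term, exactly as in the fourth and fifth steps of the proof of Theorem~\ref{thm:mainfinal}. That absorption goes through precisely under the $\theta$-analogue of the condition $\Lambda\le\Lambda_\star(p)\,$, which a careful computation identifies with~\eqref{Cdt:Lambda}.

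The genuinely new point, and the source of the factor $\mathfrak C(p,\theta)\,$, is that for $\theta<1$ the fiberwise inequality is no longer linear in $u^p$ the way Lemma~\ref{thm:lt} was: its product structure no longer aligns with the angular reduction (at $\theta=1$ this alignment is exactly what makes the argument rigid), so reassembling the fiberwise bounds into an inequality in terms of the global quantities $\big(\int_{\mathcal C}|\nabla u|^2+\Lambda u^2\big)^\theta$ and $\big(\int_{\mathcal C}u^2\big)^{1-\theta}$ requires one further Hölder (equivalently, Young) inequality in $\omega$, whose constant involves the $\mathrm L^q(\R)$ norms of the profile $u_*$, i.e.~ratios of Gamma functions; collecting all of them produces exactly $\mathfrak C(p,\theta)\,$, which equals $1$ iff $\theta=1$ (the case in which $u=u_*(s)$ saturates every step, as in the rigidity argument), and propagating the factor $\mathfrak C(p,\theta)^{-1}$ through the generalized Poincaré step yields both the upper bound $\mathsf K^*_{\rm CKN}(\theta,\Lambda,p)\,\mathfrak C(p,\theta)^{\frac{(2\,\theta-1)\,p+2}{2\,p}}$ and the threshold~\eqref{Cdt:Lambda}. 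The main obstacle I anticipate is purely computational bookkeeping: one must follow every multiplicative factor through the fiberwise $\theta$-inequality, the two Hölder steps and the generalized Poincaré inequality, and check that the outcome is precisely the stated constant and range — this requires the sharp one-dimensional $\theta$-inequality in its explicit form, the exact expression of the exponent $q$ and its admissibility, and the Gamma-function evaluations of $\int_\R u_*^q\,ds\,$. That all the fiberwise quantities are finite is, as usual, disposed of by working with $u\in C_c^\infty(\mathcal C)$ and passing to the limit.
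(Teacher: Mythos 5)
Your overall skeleton (a fiberwise one-dimensional estimate, then the Schwarz inequality for $Lv$, then H\"older in $\omega$, then the generalized Poincar\'e inequality \eqref{generalizedSobolev}) is the right one, but the step you choose as the fiberwise input does not work as stated, and it hides exactly the point where $\mathfrak C(p,\theta)$ is created. If you apply the sharp one-dimensional $\theta$-interpolation inequality on each fiber, then after raising to the power $p/2$ and integrating in $\omega$ you bound $\int_{\mathcal C}|u|^p$ by $\int_{\Sp^{N-1}}A(\omega)^{\theta p/2}\,v(\omega)^{(1-\theta)p}\,d\omega$ with $A(\omega)=\int_\R|\partial_s u|^2\,ds+\Lambda\,v(\omega)^2$. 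Since $\theta p/2+(1-\theta)p/2=p/2>1$, no H\"older or Young inequality recombines this, in the right direction, into the global product $\big(\int_{\mathcal C}(|\nabla u|^2+\Lambda u^2)\big)^{\theta p/2}\big(\int_{\mathcal C}u^2\big)^{(1-\theta)p/2}$; moreover the angular gradient $\int_{\mathcal C}|Lu|^2$, which must end up \emph{inside} the $\theta$-power of \eqref{CKNtheta}, never enters your fiberwise bound at all. The paper avoids this multiplicative obstruction by keeping the estimated functional linear: it takes $u$ to be an actual extremal (existence for $\theta>\vartheta(p,N)$ from \cite{1005}, the endpoint by a limiting argument), writes the Euler--Lagrange equation \eqref{Euler-theta}, deduces the identity $\mathcal F[u]=\int_{\mathcal C}\big[(\partial_s u)^2-u^p+(Lu)^2\big]\,ds\,d\omega=-\Lambda\int_{\mathcal C}u^2\,ds\,d\omega$, and then bounds $\mathcal F[u]$ from below by applying Lemma~\ref{thm:lt} to the potential $V=u^{p-2}$ with the \emph{modified} exponent $\gammatheta=\frac{(2\,\theta-1)\,p+2}{2\,(p-2)}$, chosen so that $V^{\gammatheta+\frac12}=u^{\theta p}$; a further H\"older inequality on $\mathcal C$ then converts $\int_{\mathcal C}u^{\theta p}$ into the product of powers of $\int_{\mathcal C}u^2$ and $\int_{\mathcal C}u^p$, and the Poincar\'e absorption proceeds as in Theorem~\ref{thm:mainfinal} with $q=\frac{(2\,\theta+1)\,p-2}{(2\,\theta-3)\,p+6}$.

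Consequently your account of the origin of $\mathfrak C(p,\theta)$ is not correct. It does not come from an extra H\"older or Young inequality in $\omega$: it is the ratio $c_{\rm LT}(\gammatheta)^{1/\gammatheta}\,\mathcal Q[u_*]^{p/(\gammatheta(p-2))}/\Lambda$, i.e.\ it measures the failure of the symmetric profile $u_*$ (optimal for the $\theta$-interpolation among functions of $s$) to optimize the one-bound-state Lieb--Thirring inequality with exponent $\gammatheta$; these two optimizers coincide precisely when $\theta=1$, which is why $\mathfrak C(p,1)=1$. You also omit the dichotomy \eqref{Assumption:thetaLess1} (either the two constants already coincide, or $\mathcal Q[u]\le\mathcal Q[u_*]$), which is what gives $D\le\mathfrak C(p,\theta)\,\Lambda$, as well as the closing chain \eqref{Ineq:Chain}, $\Lambda\le D\le\mathfrak C(p,\theta)\,\Lambda$, which is the mechanism converting the Poincar\'e step into the stated two-sided bound on $\mathsf K_{\rm CKN}(\theta,\Lambda,p)$ and into the threshold \eqref{Cdt:Lambda}. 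Without the Euler--Lagrange identity and this chain, the fiberwise estimates cannot be turned into a comparison of best constants.
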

Although we do not establish here a symmetry result, it is interesting to notice that a symmetry result would amount to prove that $\mathsf K_{\rm CKN}(\theta,\Lambda,p)=\mathsf K_{\rm CKN}^*(\theta,\Lambda,p)\,$, except maybe on the threshold curve in the set of parameters. Theorem~\ref{thm:estimates} does not establish such a symmetry result for $\theta<1\,$, but we recover the already known fact that $\lim_{\theta\to 1}\mathsf K_{\rm CKN}(\theta,\Lambda,p)=\mathsf K_{\rm CKN}^*(1,\Lambda,p)\,$, with an explicit estimate, in the appropriate region of the parameters. This is essentially the result of Theorem~\ref{main}.

\medskip For the convenience of the reader, we split our computations in several steps and provide some details which have been skipped in the previous sections. For instance, we give the expression of $c_{\rm LT}(\gamma)$ in Lemma~\ref{thm:lt}, which is also needed to establish the expression of $\mathfrak C(p,\theta)\,$.

\subsubsection*{1. Preliminary computations}
Consider the equation
\be{Eqn:ODE}
-(p-2)^2\,w''+4\,w-2\,p\,|w|^{p-2}\,w=0 \quad\hbox{in}\quad\R\;.
\ee
The function
\[
\overline w(s)=(\cosh s)^{-\frac2{p-2}}\quad\forall\;s\in\R
\]
is, up to translations, the unique positive solution of~\eqref{Eqn:ODE}. As a consequence, the unique positive solution of
\[
-\,\theta\,w''+\eta\,w=|w|^{p-2}\,w\quad\hbox{in}\quad\R
\]
which reaches its maximum at $s=0$ can be written as
\[
w(s)=A\,\overline w(B\,s)\quad\forall\;s\in\R
\]
with
\[
A=\(\tfrac{p\,\eta}2\)^\frac 1{p-2}\quad\mbox{and}\quad B=\tfrac{p-2}2\,\sqrt{\tfrac\eta\theta}\;.
\]
As in~\cite{DDFT}, define
\[
I_q:=\int_{\R}|\overline w(s)|^q\,ds\quad\mbox{and}\quad J_2:=\int_{\R}|\overline w'(s)|^2\;ds\;.
\]
Using the formula
\[
\int_{\R}\frac{ds}{(\cosh s)^q}=\frac{\sqrt\pi\;\Gamma\(\frac q2\)}{\Gamma\(\frac{q+1}2\)}=:f(q)\;,
\]
we can compute
\[
I_2=f\(\frac 4{p-2}\)\;,\quad I_p=f\(\frac{2\,p}{p-2}\)=f\(\frac 4{p-2}+2\)\;,
\]
and get the relations
\[
\textstyle I_2=\frac{\sqrt{\pi }\;\Gamma\(\frac{2}{p-2}\)}{\Gamma\(\frac{p+2}{2\,(p-2)}\)}\;,\quad I_p=\frac{4\,I_2}{p+2}\quad\mbox{and}\quad J_2:=\frac 4{(p-2)^2}\(I_2-I_p\)=\frac{4\,I_2}{(p+2)(p-2)}\;.
\]

\subsubsection*{2. Further preliminary computations in the case $\theta<1$}
Consider now the solution of the Euler-lagrange equation satisfied by the extremals for \eqref{CKNtheta} written in the cylinder $\mathcal C\,$, that is,
\be{Euler-theta}
-\,\theta\,\(\partial_s^2 u+L^2 u\) +\left[(1-\theta)\,t[u]+\Lambda\right] u= u^{p-1}\quad\hbox{on}\quad{\mathcal C}\;,
\ee
where
\[
t[u]:=\frac{\int_{\mathcal C}\left[(\partial_s u)^2 + (Lu)^2\right]\,ds\,d\omega}{\int_{\mathcal C} u^2\;ds\,d\omega}\;.
\]
Such an extremal function always exists for all $\theta>\vartheta(p,d)$: see \cite{1005}. The case $\theta=\vartheta(p,d)$ in the theorem will be achieved by passing to the limit.

Multiplying \eqref{Euler-theta} by $u$ and integrating on $\mathcal C\,$, we find that
\[
\int_{\mathcal C}\left[(\partial_s u)^2 + (Lu)^2 + \Lambda\,u^2\right]\,ds\,d\omega=\int_{\mathcal C} u^p\,ds\,d\omega\;.
\]
To relate $\mathsf K_{\rm CKN}(\theta,\Lambda,p)$ and $\mathsf K_{\rm CKN}^*(\theta,\Lambda,p)$ we have to compare
\begin{multline*}
\mathcal Q[u]:=\frac{\kern-2pt\(\int_{\mathcal C}\left[(\partial_s u)^2 + (Lu)^2 + \Lambda\,u^2\right]\,ds\,d\omega\)^\theta\,\(\int_{\mathcal C} u^2\,ds\,d\omega\)^{1-\theta}\kern-4pt}{\(\int_{\mathcal C} u^p\,ds\,d\omega\)^\frac 2p}\\
=\(\int_{\mathcal C} u^p\,ds\,d\omega\)^{\theta-\frac 2p}\,\(\int_{\mathcal C} u^2\,ds\,d\omega\)^{1-\theta}
\end{multline*}
where equality holds because $u$ is a solution of \eqref{Euler-theta}, with the same quantity written for $u_*\,$, an extremal for \eqref{CKNtheta} in the cylinder $\mathcal C\,$, in the class of functions depending on $s\,$. Either $\mathsf K_{\rm CKN}(\theta,\Lambda,p)=\mathsf K_{\rm CKN}^*(\theta,\Lambda,p)$ and then Theorem~\ref {thm:estimates} is proved, or the inequality
\begin{multline}\label{Assumption:thetaLess1}
\frac1{\mathsf K_{\rm CKN}(\theta,\Lambda,p)}= \mathcal Q[u]=\(\int_{\mathcal C} u^p\,ds\,d\omega\)^{\theta-\frac 2p}\,\(\int_{\mathcal C} u^2\,ds\,d\omega\)^{1-\theta}\\
\le \frac1{K^*_{\rm CKN}(\theta,\Lambda,p)}=\mathcal Q[u_*]=\(\int_{\R} u_*^p\,ds\)^{\theta-\frac 2p}\,\(\int_{\R} u_*^2\,ds\)^{1-\theta}
\end{multline}
is strict.

\subsubsection*{3. The symmetric optimal function for $\theta<1$}
The solution $u_*$ can be explicitly computed. On the one hand, it solves
\[
-\,\theta\,(u_*)''+\eta\,u_*= u_*^{p-1}\quad\hbox{in}\quad\R\;,
\]
with $\eta=(1-\theta)\,t[u_*]+\Lambda\,$. After multiplying by $u_*\,$, integrating with respect to $s\in\R$ and dividing by $\int_{\R} u_*^2\,ds\,$, we find
\[
t[u_*]+\Lambda=\frac{\int_{\R} u_*^p\,ds}{\int_{\R} u_*^2\,ds}
\]
where $u_*(s)=A\,\overline w(B\,s)\,$, for all $s\in\R\,$, has been computed in the first step of this section. From this expression, we deduce that
\[
t[u_*]=B^2\,\frac{J_2}{I_2}=\frac{p-2}{p+2}\,\frac\eta\theta\quad\mbox{and}\quad\frac{\int_{\R} u_*^p\,ds}{\int_{\R} u_*^2\,ds}=A^{p-2}\,\frac{I_p}{I_2}=\frac{2\,p\,\eta}{p+2}\;,
\]
which provides the equation
\[
\frac{p-2}{p+2}\,\frac\eta\theta+\Lambda=\frac{2\,p\,\eta}{p+2}
\]
and uniquely determines
\[
\eta=\frac{(p+2)\,\theta}{(2\,\theta-1)\,p+2}\,\Lambda\;.
\]
Recall that $A=\(\tfrac{p\,\eta}2\)^\frac 1{p-2}$ and $B=\tfrac{p-2}2\,\sqrt{\tfrac\eta\theta}\,$.

\subsubsection*{4. Collecting estimates: proof of Theorem~\ref{thm:estimates}}

As in the case $\theta=1\,$, we start by estimating the functional
\[
\mathcal F[u]:=\int_{\mathcal C}\left[(\partial_s u)^2 - u^p + (Lu)^2\right]\,ds\,d\omega
\]
from below. From Lemma~\ref{thm:lt} applied with $\gamma$ replaced by some well-chosen~$\gammatheta\,$, we get the lower bound
\[
\mathcal F[u]\ge -\,c_{\rm LT}(\gammatheta)^\frac 1\gammatheta\int_{\Sp^{N-1}}\(\int_\R u^{\theta\,p} \,ds\)^\frac 1\gammatheta\int_\R u^2\,ds\,d\omega+\int_{\mathcal C} (Lu)^2\,ds\,d\omega\;,
\]
where $\gammatheta$ is now chosen such that $(\gammatheta +\frac12)\,(p-2)=\theta\,p\,$, that is
\[
\gammatheta=\frac{(2\,\theta-1)\,p+2}{2\,(p-2)}\;.
\]
Exactly as in the proof of Theorem~\ref{thm:mainfinal}, except that $\gamma$ and $p$ are now replaced respectively by $\gammatheta$ and by $\theta\,p\,$, we find the lower bound
\begin{multline*}
\mathcal F[u]\ge\int_{\Sp^{N-1}} (Lv)^2(\omega)\,d\omega\\
- c_{\rm LT}(\gammatheta)^\frac 1\gammatheta\left[\int_{\mathcal C}|u(s,\omega)|^{\theta\,p}\,ds\,d\omega\right]^\frac 1\gammatheta\(\int_{\Sp^{N-1}} v^{q+1} (\omega)\;d\omega\)^\frac 2{q+1}
\end{multline*}
with $q+1=\frac{2\,\gammatheta}{\gammatheta-1}\,$, \emph{i.e.}
\[
q=\frac{(2\,\theta+1)\,p-2}{(2\,\theta-3)\,p+6}\;.
\]
By H\"older's inequality, we find
\[
\int_{\mathcal C}u^{\theta\,p}\,ds\,d\omega\le\(\int_{\mathcal C}u^2\,ds\,d\omega\)^\frac{(1-\theta)\,p}{p-2}\(\int_{\mathcal C}u^p\,ds\,d\omega\)^\frac{\theta\,p-2}{p-2}\,.
\]
Altogether, we have shown that
\[
\mathcal F[u]\ge\int_{\Sp^{N-1}} (Lv)^2\,d\omega-D\(\int_{\Sp^{N-1}}v^{q+1}\,d\omega\)^\frac 2{q+1}
\]
where we abbreviated
\[
D:= c_{\rm LT}(\gammatheta)^\frac 1\gammatheta{\left[\(\int_{\mathcal C}u^2\,ds\,d\omega\)^\frac{(1-\theta)\,p}{p-2}\(\int_{\mathcal C}u^p\,ds\,d\omega\)^\frac{\theta\,p-2}{p-2}\right]}^\frac1\gammatheta\;.
\]
With $\gammatheta=\frac{(2\,\theta-1)\,p+2}{2\,(p-2)}$ and using Assumption°~\eqref{Assumption:thetaLess1}, we know that
\[
D=c_{\rm LT}(\gammatheta)^\frac 1\gammatheta\,\mathcal Q[u]^\frac p{\gammatheta\,(p-2)}\le c_{\rm LT}(\gammatheta)^\frac 1\gammatheta\,\mathcal Q[u_*]^\frac p{\gammatheta\,(p-2)}=\mathfrak C(p,\theta)\,\Lambda\;,
\]
where the last equality is a definition of $\mathfrak C(p,\theta)$ (see the computation of its precise value below).

As in the case $\theta=1\,$, using again the generalized Poincar\'e inequality, if $D\le\frac{N-1}{q-1}$ and $q\le\frac{N+1}{N-3}\,$, we find that
\[
\mathcal F[u]\ge -D\int_{\Sp^{N-1}}v^2\,d\omega\;.
\]
A sufficient condition for $D\le\frac{N-1}{q-1}$ is
\[
\mathfrak C(p,\theta)\,\Lambda\le\frac{N-1}{q-1}=(N-1)\,\frac{(2\,\theta-3)\,p+6}{4\,(p-2)}
\]
which is equivalent to \eqref{Cdt:Lambda}, while the condition $\frac{(2\,\theta+1)\,p-2}{(2\,\theta-3)\,p+6}=q\le\frac{N+1}{N-3}$ amounts to $p\le\frac{2\,N}{N-2\,\theta}\,$, that is $\theta\ge\vartheta(p,N)\,$.

Because of the first equality in~\eqref{Assumption:thetaLess1}, $u$ is a minimizer of $\mathcal Q[u]$ and therefore solves~\eqref{Euler-theta}. A multiplication of the equation by $u$ and an integration on $\mathcal C$ shows that
\[
-\,\Lambda\int_{\mathcal C}u^2\,ds\,\mu(d\omega)=\mathcal F[u]\;.
\]
Hence, if $\theta\ge\vartheta(p,N)$ and Condition \eqref{Cdt:Lambda} holds, we have proved that
\[
-\,\Lambda\int_{\mathcal C}u^2\,ds\,\mu(d\omega)=\mathcal F[u]\ge -D\int_{\Sp^{N-1}}v^2\,d\omega=-D\int_{\mathcal C}u^2\,ds\,\mu(d\omega)\;,
\]
that is, $\Lambda\le D\,$, and then we have the chain of inequalities
\be{Ineq:Chain}
\Lambda\le D=c_{\rm LT}(\gammatheta)^\frac 1\gammatheta\,\mathcal Q[u]^\frac p{\gammatheta\,(p-2)}\le c_{\rm LT}(\gammatheta)^\frac 1\gammatheta\,\mathcal Q[u_*]^\frac p{\gammatheta\,(p-2)}=\mathfrak C(p,\theta)\,\Lambda
\ee
where $\mathcal Q[u]=1/\mathsf K_{\rm CKN}(\theta,\Lambda,p)$ and $\mathcal Q[u_*]=1/\mathsf K_{\rm CKN}^*(\theta,\Lambda,p)$ by~\eqref{Assumption:thetaLess1}. Recalling that $\gammatheta=\frac{(2\,\theta-1)\,p+2}{2\,(p-2)}\,$, this allows to express \eqref{Ineq:Chain} as
\begin{multline*}
\frac{c_{\rm LT}(\gammatheta)^{\frac{p-2}p}}{\mathfrak C(p,\theta)^\frac{(2\,\theta-1)\,p+2}{2\,p}}\,\Lambda^{-\frac{(2\,\theta-1)\,p+2}{2\,p}}=\mathsf K_{\rm CKN}^*(\theta,\Lambda,p)\\
\le \mathsf K_{\rm CKN}(\theta,\Lambda,p)\le c_{\rm LT}(\gammatheta)^{\frac{p-2}p}\,\Lambda^{-\frac{(2\,\theta-1)\,p+2}{2\,p}}\,,
\end{multline*}
which concludes the proof of Theorem~\ref{thm:estimates}.

\subsubsection*{5. Computation of $c_{\rm LT}(\gamma)$ in Lemma~\ref{thm:lt}}

When $\theta=1$ and $d=1\,$, equality is achieved in \eqref{Ineq:Chain} and we actually have $u=u_*\,$, up to multiplication by constants, translations and scalings. As a consequence, we can compute
\[
c_{\rm LT}(\gamma)=\frac{\Lambda^\gamma}{Q[u_*]^\frac p{p-2}}=\frac{\Lambda^\gamma}{\int_{\R}u_*^p\,ds}
\]
where $\gamma=\frac{p+2}{2\,(p-2)}\,$, \emph{i.e.}~$p=2\,\frac{2\,\gamma+1}{2\,\gamma-1}\,$. With $\theta=1$ and $\eta=\Lambda\,$, we know that $u_*(s)=A\,\overline w(B\,s)$ with $A=\big(\tfrac{p\,\Lambda}2\big)^\frac 1{p-2}$ and $B=\tfrac{p-2}2\,\sqrt\Lambda\,$, so that
\[
\int_{\R}u_*^p\,ds=\frac{A^p}B\,I_p=\frac{8\,\sqrt\pi}{p^2-4}\,\frac{\Gamma\(\frac{2}{p-2}\)}{\Gamma\(\frac{p+2}{2\,(p-2)}\)}\,\(\frac{p\,\Lambda}2\)^\frac p{p-2}\,\frac 1{\sqrt\Lambda}
\]
and hence
\[
c_{\rm LT}(\gamma)=\frac{p^2-4}{8\,\sqrt\pi}\,\frac{\Gamma\(\frac{p+2}{2\,(p-2)}\)}{\Gamma\(\frac{2}{p-2}\)}\,\(\tfrac2p\)^\frac p{p-2}
\]
where $\gamma=\frac 12\,\frac{p+2}{p-2}\,$, that is $p=2\,\frac{2\,\gamma+1}{2\,\gamma-1}\,$. All computations done, we get
\[
c_{\rm LT}(\gamma)=\(\tfrac{2\,\gamma-1}{2\,\gamma+1}\)^{\gamma-\frac 12}\,\tfrac{2\,\gamma}{2\,\gamma+1}\,\frac{\Gamma(\gamma)}{\sqrt\pi\,\Gamma(\gamma+\tfrac 12)}\;.
\]

\subsubsection*{6. Computation of $\mathfrak C(p,\theta)$}

{}From \eqref{Ineq:Chain}, we know that
\[
\mathfrak C(p,\theta)=c_{\rm LT}(\gammatheta)^\frac 1\gammatheta\,\frac 1\Lambda\,\mathcal Q[u_*]^\frac{2\,p} {(2\,\theta-1)\,p+2}\;.
\]
Using the results of Step 5 allows to compute $\mathfrak C(p,\theta)\,$. Notice that the term $\mathcal Q[u_*]^\frac{2\,p} {(2\,\theta-1)\,p+2}$ is proportional to $\Lambda\,$, so that $\Lambda$ does not enter in the expression of $\mathfrak C(p,\theta)\,$.

\section{Interpolation and one-bound state Lieb-Thirring inequalities on general cylinders}\label{Sec:General}

In this section we extend the results of the previous sections to the more general case of the cylinders $\R\times\mathcal M\,$, where $\mathcal M$ is a Riemannian manifold, using the results of \cite{BV-V}. For this purpose we need the following assumptions:

\emph{ $(\mathcal M,g)$ is a compact Riemannian manifold of dimension $N-1\ge2\,$, without boundary, $\Delta_g$ is the Laplace-Beltrami operator on $\mathcal M\,$, the Ricci tensor $R$ and the metric tensor $g$ satisfy $R\ge\frac{N-2}{N-1}\,(q-1)\,\lambda\,g$ in the sense of quadratic forms, with $q>1\,$, $\lambda>0$ and $q\le\frac{N+1}{N-3}\,$. Moreover, one of these two inequalities is strict if $(\mathcal M,g)$ is $\Sp^{N-1}$ with the standard metric.}

For brevity, we shall say that (H) holds if these assumptions are satisfied.
\begin{theorem}\label{thm:BV-V} {\rm \cite{BV-V}} Assume that {\rm (H)} holds. If $u$ is a positive solution of
\[
\Delta_g\,u-\lambda\,u+u^q=0\;,
\]
then $u$ is constant with value $\lambda^{1/(q-1)}\,$.
\end{theorem}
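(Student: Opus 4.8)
\textbf{Proof proposal for Theorem~\ref{thm:BV-V}.} The plan is to follow the classical Bochner--Lichnerowicz--Obata strategy, adapted to this semilinear context, exactly as in \cite{BV-V}. The first step is to normalize: writing $u=\lambda^{1/(q-1)}\,\phi$ reduces the equation to $\Delta_g\phi-\lambda\phi+\lambda\phi^q=0$, so that constancy of $u$ is equivalent to $\phi\equiv1$. Then I would introduce the nonlinear ``pressure'' type change of variables $\phi=p^{-\beta}$ (equivalently $\phi^\sigma$ for a suitably chosen exponent) in order to transform the equation into one for which a Bochner identity produces a manifestly signed quantity. The key is that there is a one-parameter family of admissible substitutions, and the exponent is tuned so that the dimensional constant appearing in the Bochner formula matches the exponent $q$ in the nonlinearity; this is precisely where the hypothesis $q\le\frac{N+1}{N-3}$ enters.

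The heart of the argument is the following. Multiply the transformed equation by an appropriate power of $\phi$ (or of the new unknown) and integrate over $\mathcal M$; after integration by parts one gets an integral identity of the schematic form
\[
\int_{\mathcal M}\Big(\big\|\mathrm{Hess}\,\psi-\tfrac{\Delta_g\psi}{N-1}\,g\big\|^2+\big(R-\tfrac{N-2}{N-1}(q-1)\lambda\,g\big)(\nabla\psi,\nabla\psi)+(\text{nonneg. nonlinear term})\,|\nabla\psi|^4\Big)\,d\mu_g=0\,,
\]
where $\psi$ is the new unknown and each of the three terms is nonnegative: the first by Cauchy--Schwarz on the traceless Hessian, the second by the Ricci lower bound in (H), and the third because the relevant coefficient has a sign once $q$ lies in the stated range. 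The careful bookkeeping that makes all three terms line up with the \emph{same} test function and the \emph{same} integration by parts is the main obstacle; it requires choosing the substitution exponent and the multiplier exponent consistently, and controlling the boundary-free integrations by parts (legitimate since $\mathcal M$ is compact without boundary, and $u>0$ so the powers are smooth).

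From the vanishing of the integral I conclude that each term is identically zero. The nonnegative nonlinear term vanishing forces $|\nabla\psi|\equiv0$ unless the coefficient is degenerate; in the generic case this immediately gives $\psi$, hence $\phi$, hence $u$, constant, and plugging a constant back into the equation yields the value $\lambda^{1/(q-1)}$. In the borderline situation where the nonlinear coefficient degenerates (which happens exactly at $q=\frac{N+1}{N-3}$), one still has $\mathrm{Hess}\,\psi=\frac{\Delta_g\psi}{N-1}g$ together with equality in the Ricci bound; this is the Obata rigidity configuration, so either $\nabla\psi\equiv0$ (done) or $(\mathcal M,g)$ is isometric to the round sphere $\Sp^{N-1}$ with a specific value of $\lambda$ — but that case is excluded by the last sentence of (H), which demands that at least one of the two inequalities be strict precisely on the round sphere. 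Hence in all admissible cases $\psi$ is constant and the theorem follows.
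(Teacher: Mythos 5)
First, a point of comparison: the paper offers no proof of this statement at all. Theorem~\ref{thm:BV-V} is imported verbatim from \cite{BV-V} (Bidaut-V\'eron and V\'eron) and is used as a black box to derive the generalized Poincar\'e inequality of Proposition~\ref{cor:BV-V}. So there is no in-paper argument to measure your proposal against; the relevant benchmark is the original proof in \cite{BV-V}, and your outline does correctly identify its strategy (a nonlinear Bochner--Lichnerowicz--Obata scheme: a substitution $u\mapsto\psi$, an integrated Bochner identity whose three pieces are the traceless Hessian, the Ricci defect $R-\frac{N-2}{N-1}(q-1)\lambda\,g$, and a $|\nabla\psi|^4$ term whose coefficient is signed precisely because $q\le\frac{N+1}{N-3}$, i.e.\ $q+1$ does not exceed the critical Sobolev exponent of the $(N-1)$-dimensional manifold $\mathcal M$).

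That said, as a proof the proposal has a genuine gap, and it is not a peripheral one: the central integral identity is only written ``in schematic form,'' the substitution exponent and the multiplier exponent are never chosen, and the coefficient of the $|\nabla\psi|^4$ term is never computed, so its claimed nonnegativity for $q\le\frac{N+1}{N-3}$ is asserted rather than proved. You explicitly flag this bookkeeping as ``the main obstacle'' and then do not carry it out --- but that bookkeeping \emph{is} the theorem; everything else (compactness of $\mathcal M$, positivity of $u$, the Obata rigidity discussion excluding the round sphere via the strictness clause in (H)) is routine once the identity with the correct signs is in hand. Two smaller inaccuracies compound this: the degeneracy of the quartic coefficient and the equality case in the Ricci bound are two logically independent borderline situations, and your final paragraph conflates them (constancy in the generic case actually follows most directly from the vanishing of the Ricci term when the curvature inequality is strict, not from the quartic term); and the claim that the quartic coefficient degenerates ``exactly at $q=\frac{N+1}{N-3}$'' is a guess that would need to be checked against the actual computation. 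To turn this into a proof you would have to reproduce the computation of \cite{BV-V} (their Theorem~6.1 and Corollary~6.2), which is exactly what the present paper chooses to cite instead of redo.
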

As a consequence (see \cite[Corollary 6.2]{BV-V}), with
\[
\mathsf D(\mathcal M,q):=\max\left\{\lambda>0\,:\,R\ge\tfrac{N-2}{N-1}\,(q-1)\,\lambda\,g\right\}\;,
\]
we get the following generalized Poincar\'e inequality, an extension of \eqref{generalizedSobolev}.
\begin{proposition}\label{cor:BV-V} {\rm \cite{BV-V}} Under Assumption {\rm (H)}, if $\left|M\right|=1$ and $\mathsf D(\mathcal M,q)>0\,$, then
\[
\frac 1{\mathsf D(\mathcal M,q)}\int_{\mathcal M}|\nabla v|^2+\int_{\mathcal M}|v|^2\ge\(\int_{\mathcal M}|v|^{q+1}\)^\frac 2{q+1}\quad\forall\;v\in W^{1,1}(\mathcal M)\;.
\]
\end{proposition}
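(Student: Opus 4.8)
The plan is to derive Proposition~\ref{cor:BV-V} directly from the rigidity statement of Theorem~\ref{thm:BV-V}, exactly in the spirit of the passage from a Liouville-type result for an elliptic PDE to the sharp constant in the associated interpolation inequality. First I would consider, for $\left|\mathcal M\right|=1$ and $\lambda>0$ with $R\ge\frac{N-2}{N-1}(q-1)\,\lambda\,g$, the functional
\[
\mathcal J_\lambda[v]:=\frac1\lambda\int_{\mathcal M}|\nabla v|^2+\int_{\mathcal M}|v|^2-\(\int_{\mathcal M}|v|^{q+1}\)^\frac2{q+1}
\]
on $W^{1,2}(\mathcal M)$, restricted (by density and by the fact that $|\nabla|v||\le|\nabla v|$ a.e.) to nonnegative functions, and establish that $\inf\mathcal J_\lambda=0$, the infimum being attained exactly at the constants. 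Since $q\le\frac{N+1}{N-3}<\frac{N+1}{N-3}+\varepsilon$ places $q+1$ at or below the critical Sobolev exponent $\frac{2N}{N-2}$ on an $(N-1)$-dimensional manifold, the embedding $W^{1,2}(\mathcal M)\hookrightarrow L^{q+1}(\mathcal M)$ is continuous (and compact when the inequality is strict), so $\mathcal J_\lambda$ is bounded below and coercive on the set $\{\int_{\mathcal M}|v|^{q+1}=1\}$; a minimizing sequence converges weakly, and lower semicontinuity of the gradient term plus strong $L^{q+1}$ convergence produce a nonnegative minimizer $u$.

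Next I would write the Euler--Lagrange equation for this minimizer. Normalizing $\int_{\mathcal M}u^{q+1}=1$ and calling the optimal value of the constrained problem $S_\lambda$, the minimizer solves, after scaling,
\[
-\frac1\lambda\,\Delta_g u+u=u^q\quad\text{on }\mathcal M,
\]
i.e. $\Delta_g u-\lambda u+\lambda u^q=0$. A further rescaling $u\mapsto\lambda^{-1/(q-1)}u$ reduces this to $\Delta_g u-\lambda u+u^q=0$, precisely the equation in Theorem~\ref{thm:BV-V}; positivity of the minimizer follows from the strong maximum principle (or by replacing $u$ with a positive solution obtained by perturbation, then passing to the limit — this technicality is exactly where the ``one of the two inequalities is strict'' hypothesis on the sphere is used, to rule out the extra nonconstant solutions coming from conformal invariance). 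Theorem~\ref{thm:BV-V} then forces $u\equiv\lambda^{1/(q-1)}$, so the minimizer is constant; plugging a constant into $\mathcal J_\lambda$ gives $\mathcal J_\lambda[\text{const}]=0$ because $\left|\mathcal M\right|=1$. Hence $\mathcal J_\lambda[v]\ge\mathcal J_\lambda[u]=0$ for all admissible $v$, which is the claimed inequality; the full $W^{1,1}$ statement then follows by the truncation/density argument of \cite{BV-V}, approximating $W^{1,1}$ functions and using that only the first power of the gradient appears after the standard substitution $v=w^{\beta}$ that linearizes the problem (the change of unknown underlying the Bakry--Émery computation of \cite{BV-V}).

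The main obstacle I anticipate is not the variational machinery but the careful bookkeeping of constants and the identification $\mathsf D(\mathcal M,q)=\max\{\lambda:R\ge\frac{N-2}{N-1}(q-1)\lambda g\}$: one must check that Theorem~\ref{thm:BV-V} applies for \emph{every} admissible $\lambda$ up to this maximum, and that the resulting family of inequalities is monotone in $\lambda$ so that the sharp constant is exactly $1/\mathsf D(\mathcal M,q)$ in front of the gradient term. A secondary subtlety is the borderline exponent $q=\frac{N+1}{N-3}$, where compactness of the Sobolev embedding fails; there one either argues by a limiting procedure from $q'<q$ (using continuity of both sides in $q$) or invokes directly the existence theory in \cite{BV-V}, which already covers this endpoint. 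Since the statement is attributed to \cite{BV-V}, the cleanest route is to record it as an immediate corollary of Theorem~\ref{thm:BV-V} via the argument above and refer to \cite[Corollary 6.2]{BV-V} for the endpoint and regularity details.
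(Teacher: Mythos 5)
Your derivation is essentially correct, but note that the paper itself offers no proof of this Proposition: it is quoted directly as \cite[Corollary 6.2]{BV-V}, the rigidity statement of Theorem~\ref{thm:BV-V} being the corresponding Theorem~6.1 there. Your variational route --- minimize $\frac1\lambda\int_{\mathcal M}|\nabla v|^2+\int_{\mathcal M}v^2$ on the set $\{\int_{\mathcal M}|v|^{q+1}=1\}$, pass to a nonnegative minimizer, write the Euler--Lagrange equation, rescale it to $\Delta_g u-\lambda\,u+u^q=0$, and invoke the rigidity to conclude that the minimizer is constant, whence the infimum equals $1$ because $|\mathcal M|=1$ --- is the standard way of passing from the Liouville theorem to the sharp inequality, and it is the argument underlying the cited corollary. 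Three small points deserve attention. First, the critical exponent for $W^{1,2}(\mathcal M)\hookrightarrow \mathrm L^{q+1}(\mathcal M)$ on an $(N-1)$-dimensional manifold is $\frac{2\,(N-1)}{N-3}$, not $\frac{2N}{N-2}$; the condition $q\le\frac{N+1}{N-3}$ is exactly $q+1\le\frac{2\,(N-1)}{N-3}$, so your conclusion stands. Second, taking $\lambda=\mathsf D(\mathcal M,q)$ saturates the Ricci inequality, so on the round sphere Assumption (H) then forces $q<\frac{N+1}{N-3}$ strictly; this is consistent with how the Proposition is used (the endpoint case on $\Sp^{N-1}$ is covered separately by \eqref{generalizedSobolev} via \cite{Beck}), and in the strictly subcritical range the compactness you need for existence of a minimizer is available, the endpoint being recovered by letting $q'\uparrow q$ together with the continuity of $q\mapsto\mathsf D(\mathcal M,q)$ and of the $\mathrm L^{q+1}$ norm, as you indicate. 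Third, the closing remark about the substitution $v=w^\beta$ is a red herring: that change of unknown belongs to the Bochner--Lichnerowicz--Weitzenb\"ock proof of Theorem~\ref{thm:BV-V} itself, not to the passage from the rigidity result to the inequality, and the $W^{1,1}$ formulation is essentially a matter of convention since the left-hand side is infinite off $H^1(\mathcal M)$; it suffices to establish the inequality on $H^1(\mathcal M)$, as you do.
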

With these tools in hand, we can now consider the general cylinder
\[
\mathcal C:=\R\times\mathcal M\;.
\]
Using the notations of Section~\ref{Sec:Beyond}, let $b\in[a,a+1]$ and assume that $N\ge3\,$, $p=p(a,b)\,$, and $\theta\in[\vartheta(p,N),1]\,$. Under the assumptions of Proposition \ref{cor:BV-V}, there exists a finite positive constant $\mathsf K(\theta,\Lambda,p)$ such that, for any $u\in C_c^\infty(\R^N\setminus\{0\})(\mathcal C)\,$,
\be{InterpManifold}
\(\int_{\mathcal C}|u|^p\)^\frac 2p\le\mathsf K(\theta,\Lambda,p){\(\int_{\mathcal C}|\nabla u|^2+\Lambda\int_{\mathcal C}|u|^2\)}^\theta\(\int_{\mathcal C}|u|^2\)^{1-\theta}\,.
\ee
Such an interpolation inequality is easy to establish using H\"older and Sobolev inequalities on $\mathcal C\,$. We are now in a position to state a result which generalizes Theorems~\ref{main-cylinder} and \ref{thm:estimates}.
\begin{theorem}\label{thm:estimatesManifold} Assume that {\rm (H)} holds and $\mathcal M\neq\Sp^{N-1}\,$. With the above notations and $\mathfrak C(p,\theta)$ defined as in Section~\ref{Sec:Beyond}, for any $p\in(2,2^*]$ and any $\theta\in[\vartheta(p,N),1]\,$, we have the estimate
\[
\mathsf K_{\rm CKN}^*(\theta,\Lambda,p)\le\mathsf K(\theta,\Lambda,p)\le\mathsf K_{\rm CKN}^*(\theta,\Lambda,p)\,\mathfrak C(p,\theta)^\frac{(2\,\theta-1)\,p+2}{2\,p}
\]
under the conditions $p<6$ if $N=3\,$, and
\[
a_c^2<\Lambda\le\frac 1{\mathfrak C(p,\theta)}\,\mathsf D\(\mathcal M,\frac{(2\,\theta+1)\,p-2}{(2\,\theta-3)\,p+6}\)\;.
\]
In the particular case $\theta=1\,$, $\mathfrak C(p,1)=1\,$, so that $\mathsf K(\theta,\Lambda,p)=\mathsf K_{\rm CKN}(\theta,\Lambda,p)$ and the extremals of~\eqref{InterpManifold} are equal to $u_*\,$, up to translation and multiplication by a constant.
\end{theorem}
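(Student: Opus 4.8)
The plan is to mirror, essentially verbatim, the argument of Section~\ref{Sec:Beyond} (the proof of Theorem~\ref{thm:estimates}), replacing everywhere the sphere $\Sp^{N-1}$ by the manifold $\mathcal M$, the constant $\tfrac{N-1}{q-1}$ coming from the sphere's generalized Poincar\'e inequality by the constant $\mathsf D(\mathcal M,q)$ from Proposition~\ref{cor:BV-V}, and the existence of extremals for \eqref{CKNtheta} on the sphere by the existence of extremals for \eqref{InterpManifold} on $\mathcal C=\R\times\mathcal M$ (which is standard from H\"older and Sobolev on $\mathcal C$, together with concentration-compactness as in \cite{1005}, for $\theta>\vartheta(p,N)$, the endpoint $\theta=\vartheta(p,N)$ being reached by passing to the limit). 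The point of the hypothesis $\mathcal M\neq\Sp^{N-1}$ is that Theorem~\ref{thm:BV-V} then gives genuine rigidity (the only positive solution is the constant) without the borderline subtlety that arises on the sphere, so that in the equality analysis one does \emph{not} need the delicate fifth-step discussion of the $\Lambda=\Lambda_\star$ case.

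Concretely, I would proceed as follows. First, reduce to the Euler--Lagrange equation: either $\mathsf K(\theta,\Lambda,p)=\mathsf K^*_{\rm CKN}(\theta,\Lambda,p)$ and there is nothing to prove, or there is a (non-radial) extremal $u$ of \eqref{InterpManifold} satisfying \eqref{Euler-theta} with $\Sp^{N-1}$ replaced by $\mathcal M$, and with $\mathcal Q[u]\le\mathcal Q[u_*]$, where $u_*$ is the $s$-dependent extremal computed exactly as in Step~3 of Section~\ref{Sec:Beyond} (the one-dimensional computation is unchanged). Second, apply Lemma~\ref{thm:lt} in the $s$ variable with $\gammatheta=\frac{(2\theta-1)p+2}{2(p-2)}$, then Schwarz's inequality to pass from $Lu$ to $L v$ with $v(\omega):=(\int_\R u^2\,ds)^{1/2}$ on $\mathcal M$, then H\"older to introduce $\mathcal Q[u]$, obtaining as in Section~\ref{Sec:Beyond} the lower bound
\[
\mathcal F[u]\ge\int_{\mathcal M}|\nabla v|^2-D\Bigl(\int_{\mathcal M}v^{q+1}\Bigr)^{\frac 2{q+1}},\qquad q=\frac{(2\theta+1)p-2}{(2\theta-3)p+6},
\]
with $D=c_{\rm LT}(\gammatheta)^{1/\gammatheta}\,\mathcal Q[u]^{p/(\gammatheta(p-2))}\le\mathfrak C(p,\theta)\,\Lambda$. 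Third, invoke Proposition~\ref{cor:BV-V} (valid since (H) holds and $q$ is in the admissible range, which is again equivalent to $\theta\ge\vartheta(p,N)$, together with $p<6$ when $N=3$) to get $\mathcal F[u]\ge -D\int_{\mathcal M}v^2$ as soon as $D\le\mathsf D(\mathcal M,q)$, i.e. exactly under the stated condition $\mathfrak C(p,\theta)\,\Lambda\le\mathsf D(\mathcal M,q)$; combined with $\mathcal F[u]=-\Lambda\int_{\mathcal C}u^2$ this forces $\Lambda\le D\le\mathfrak C(p,\theta)\Lambda$, giving the displayed two-sided estimate on $\mathsf K(\theta,\Lambda,p)$.

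Finally, the equality case when $\theta=1$: here $\mathfrak C(p,1)=1$, so the chain collapses to equalities at every step. Equality in Lemma~\ref{thm:lt} forces $u^{p-2}(s,\omega)=A(\omega)^{p-2}\cosh(B(\omega)(s-C(\omega)))^{-2}$; equality in the Schwarz step forces $\partial_s\log u^{p-2}$ to be independent of $\omega$, whence $B$ and $C$ are constant; and equality in the H\"older step forces $v$ constant on $\mathcal M$, so $A$ is constant. Then $u$ solves $-\partial_s^2 u+\Lambda u=u^{p-1}$ on $\mathcal C$, hence $\Delta_g u=0$ on each slice, hence (by Theorem~\ref{thm:BV-V} applied to the corresponding elliptic problem on $\mathcal M$, or simply since a harmonic function on a compact manifold is constant) $u$ is $\omega$-independent and equals $u_*$ up to translation and a multiplicative constant. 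The main obstacle I anticipate is not the rigidity — which is cleaner here than on the sphere because the hypothesis $\mathcal M\neq\Sp^{N-1}$ removes the borderline case — but rather the existence of a non-radial extremal solving \eqref{Euler-theta} on a general cylinder: one must know that the infimum in \eqref{InterpManifold} is attained for $\theta\in(\vartheta(p,N),1]$, which requires a concentration-compactness analysis on $\R\times\mathcal M$ analogous to the one carried out for $\R\times\Sp^{N-1}$ in \cite{Catrina-Wang-01,1005}, and then passing to the limit $\theta\downarrow\vartheta(p,N)$ to cover the endpoint.
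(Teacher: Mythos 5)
Your proposal is correct and is essentially the paper's own proof, which is given only as a two-sentence sketch: repeat Step 4 of the proof of Theorem~\ref{thm:estimates} with the sphere's Poincar\'e constant $\tfrac{N-1}{q-1}$ replaced by $\mathsf D(\mathcal M,q)$ from Proposition~\ref{cor:BV-V}, and settle the equality case ($\theta=1$) via the rigidity of Theorem~\ref{thm:BV-V}. The only point to tighten is that at the endpoint $\Lambda=\mathsf D(\mathcal M,q)$ the H\"older step carries a zero coefficient, so ``equality in H\"older forces $v$ constant'' is not available there; as you yourself note in your opening paragraph, the constancy of $A$ should then be obtained either from Theorem~\ref{thm:BV-V} applied to the Euler--Lagrange equation of the generalized Poincar\'e inequality, or from the identity $A^{p-2}=\tfrac{2\,p\,B^2}{(p-2)^2}$ as in the last part of the proof of Theorem~\ref{thm:mainfinal}.
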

\begin{proof}[Sketch of the proof] As in step 4 of the proof of Theorem~\ref{thm:estimates}, we can apply the generalized Poincar\'e inequality of Corollary~\ref {cor:BV-V} if $D$ is such that $D\le\mathsf D(\mathcal M,q)$ with $q=\frac{(2\,\theta+1)\,p-2}{(2\,\theta-3)\,p+6}\,$. The equality case can be handled directly using Theorem~\ref{thm:BV-V}.\end{proof}

As in Theorem~\ref{thm:mainfinal}, in the particular case $\theta=1\,$, we actually have a slightly stronger result. Assume that $N\ge 3\,$, $\mathcal M\neq\Sp^{N-1}\,$, and let $u$ be a non-negative function of $-\Delta_g u +\Lambda\,u= u^{p-1}$ on $\mathcal C\,$. If $\Lambda\le\mathsf D(\mathcal M,\frac{3p-2}{6-p})$ and $\int_{\mathcal C}|u(s,\omega)|^p\,ds\,d\omega\le\int _\R |u_*(s)|^p\,ds$ , where $u_*$ is the solution given by~\eqref{ustar}, then for a.e.~$\omega\in\mathcal M$ and $s\in\R\,$, we have $u(s,\omega)=u_*(s-C)$ for some constant $C\,$.

Also in the case of a general cylinder $\mathcal C\,$, for $\theta=1\,$, we also have results similar to the one-bound state version of the Lieb-Thirring inequality of Lemma~\ref{Lem:LT} and to Corollary~\ref{Cor:Lieb-Thirring}, that can be summarized as follows.
\begin{corollary}\label{Cor:LT-M} Assume that {\rm (H)} holds, $\mathcal M\neq\Sp^{N-1}\,$, $\theta=1$ and $N\ge3\,$. For any $\gamma\in(1,\infty)$ such that $\gamma\ge\frac{N-1}2\,$, if $V$ is a non-negative potential in $\mathrm L^{\gamma+\frac 12}(\mathcal C)\,$, then the operator $-\Delta_g-V$ has at least one negative eigenvalue, and its lowest eigenvalue, $-\lambda_1(V)$ satisfies
\[
\lambda_1(V)\le\Lambda_\gamma^N(\mu)\quad\mbox{with}\quad\mu=\mu(V):=\(\int_{\mathcal C}V^{\gamma+\frac12}\,ds\,d\omega\)^\frac 1\gamma
\]
for some positive constant $\Lambda_\gamma^N(\mu)\,$. Moreover, equality is achieved if and only if the eigenfunction $u$ corresponding to $\lambda_1(V)\,$, satisfies $u=V^{(2\,\gamma-1)/4}\,$, where $u$ is optimal for \eqref{InterpManifold}.

If, additionally,
\[
\mu(V)\le\frac{\mathsf D\(\mathcal M,\frac{3\,p-2}{6-p}\)}{K^{^*}(1,p,N)^\frac{2\,p}{p+2}}\quad\mbox{with}\quad p=2\,\frac{2\,\gamma+1}{2\,\gamma-1}\;,
\]
then $\lambda_1(V)\le\Lambda_\gamma^1(1)\,\mu(V)=\Lambda_\gamma^N(\mu)$ with $\Lambda_\gamma^1(1)=K^*(1,p,N)^\frac{2\,p}{p+2}\,$, and equality in the above inequality is achieved by a potential $V$ which depends only~on~$s\,$.\end{corollary}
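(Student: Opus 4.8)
The plan is to mimic the proof of Corollary~\ref{Cor:Lieb-Thirring} almost verbatim, with the spherical generalized Poincar\'e inequality~\eqref{generalizedSobolev} replaced by the manifold version stated in Proposition~\ref{cor:BV-V}, and with $\Lambda_\star(p)$ replaced by the curvature-dependent threshold $\mathsf D\!\left(\mathcal M,\tfrac{3p-2}{6-p}\right)$. First I would establish the Lieb-Thirring inequality $\lambda_1(V)\le\Lambda_\gamma^N(\mu)$ exactly as in the proof of Lemma~\ref{Lem:LT}: for the minimizer $u$ of the Rayleigh quotient $\mathcal Q[u,V]$ on $H^1(\mathcal C)$, apply H\"older's inequality in the form $\int_{\mathcal C}V\,|u|^2\le\mu^{2\gamma/(2\gamma+1)}\,\nrmC up^2$ with $p=2\,\tfrac{2\gamma+1}{2\gamma-1}$, then invoke the interpolation inequality~\eqref{InterpManifold} with $\theta=1$ (whose constant $\mathsf K(1,\Lambda,p)$ is precisely $1/C(\Lambda,p,N)$-type data defining $\Lambda_\gamma^N(\mu)$); equality forces $V^{\gamma+1/2}=\kappa\,|u|^p$ and $u$ optimal for~\eqref{InterpManifold}. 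This part requires only that $\mathcal M$ be compact so that~\eqref{InterpManifold} holds, and reproduces verbatim the argument already given on the cylinder $\R\times\Sp^{N-1}$.

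Next I would prove the sharp statement $\lambda_1(V)\le\Lambda_\gamma^1(1)\,\mu(V)$ under the smallness condition on $\mu(V)$. Here one uses the symmetry result: by the $\theta=1$ case of Theorem~\ref{thm:estimatesManifold} (equivalently, by the stronger rigidity statement recorded just after its proof), whenever $\Lambda_\gamma^N(\mu)\le\mathsf D\!\left(\mathcal M,\tfrac{3p-2}{6-p}\right)$ the extremal of~\eqref{InterpManifold} depends only on $s$, so $\mathsf K(1,\Lambda_\gamma^N(\mu),p)$ coincides with the explicitly known one-dimensional constant $K^*(1,p,N)\,\Lambda^{-(p+2)/(2p)}$. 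Unwinding the definition of $\Lambda_\gamma^N(\mu)$ as in the proof of Corollary~\ref{Cor:Lieb-Thirring} — using $\tfrac{2\gamma}{2\gamma+1}=\tfrac{p+2}{2p}$ — gives $\Lambda_\gamma^N(\mu)=K^*(1,p,N)^{2p/(p+2)}\,\mu$, i.e.\ $\Lambda_\gamma^N(\mu)=\Lambda_\gamma^1(1)\,\mu$ with $\Lambda_\gamma^1(1)=K^*(1,p,N)^{2p/(p+2)}$. Since $\mu\mapsto\Lambda_\gamma^N(\mu)$ is monotone increasing (itself a consequence of the monotonicity of $\Lambda\mapsto C(\Lambda,p,N)$), the condition $\Lambda_\gamma^N(\mu)\le\mathsf D\!\left(\mathcal M,\tfrac{3p-2}{6-p}\right)$ is equivalent to $\mu\le\mathsf D\!\left(\mathcal M,\tfrac{3p-2}{6-p}\right)\big/K^*(1,p,N)^{2p/(p+2)}$, which is exactly the hypothesis; and in the equality case the optimal $V$, being a power of the optimizer of~\eqref{InterpManifold}, depends only on $s$.

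The only genuinely new ingredient relative to the $\Sp^{N-1}$ case is that the generalized Poincar\'e inequality and the associated rigidity (Theorem~\ref{thm:BV-V}) are now the Bidaut-V\'eron--V\'eron results for manifolds of positive Ricci curvature, rather than their classical spherical counterparts; this is why the restriction $\mathcal M\neq\Sp^{N-1}$ appears, so that the strict-inequality branch of hypothesis (H) removes the borderline case. I therefore expect no obstacle beyond bookkeeping: the main point to be careful about is that the equality analysis in the $\theta=1$ rigidity argument (the $\Lambda=\mathsf D$ borderline, treated via $\partial_s\log u^{p-2}$ being $\omega$-independent and then via the ODE identity pinning down $A^{p-2}=\tfrac{2pB^2}{(p-2)^2}$) goes through on $\mathcal C=\R\times\mathcal M$ with $L^2$ replaced by $\Delta_g$ — which it does, since that step used only one-dimensional facts about $\cosh$ and the equation, not any special structure of the sphere. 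I would conclude by noting that the details are entirely parallel to Sections~\ref{Sec:LTN}--\ref{Sec:Beyond} and leave them to the reader.
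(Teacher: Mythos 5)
Your proposal is correct and follows essentially the route the paper intends: the paper gives no separate proof of Corollary~\ref{Cor:LT-M}, presenting it as the transcription of Lemma~\ref{Lem:LT} and Corollary~\ref{Cor:Lieb-Thirring} to $\R\times\mathcal M$, with the generalized Poincar\'e inequality of Proposition~\ref{cor:BV-V} and the threshold $\mathsf D\big(\mathcal M,\frac{3\,p-2}{6-p}\big)$ replacing \eqref{generalizedSobolev} and $\Lambda_\star(p)$, which is exactly your argument. The only cosmetic difference is at the borderline case: you propose to rerun the $\partial_s\log u^{p-2}$ analysis, whereas the paper disposes of equality directly via the rigidity statement of Theorem~\ref{thm:BV-V} (which, since $\mathcal M\neq\Sp^{N-1}$, requires no strictness in hypothesis (H)); both work.
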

The constant $\Lambda_\gamma^N(\mu)$ can be related to $\mathsf K(\theta,\Lambda,p)$ as in the case $\mathcal M=\Sp^{N-1}\,$.

\medskip\noindent{\small{\bf Acknowlegments.} J.D.~and M.J.E.~have been supported by the projects CBDif and EVOL of the French National Research Agency (ANR). M.J.E.~has also been partially supported by the ANR project NONAP. M.L.~has been supported in part by NSF grant DMS-0901304.}

\noindent{\small \copyright\,2011 by the authors. This paper may be reproduced, in its entirety, for non-commercial purposes.}


\end{document}